\documentclass[11pt]{article}
\usepackage{amsmath,amsthm, amssymb, amsfonts,latexsym,amscd}
\usepackage{tikz}
\usetikzlibrary{shadings,patterns}
\newtheorem{theorem}{Theorem}[section]
\newtheorem{corollary}[theorem]{Corollary}
\newtheorem{lemma}[theorem]{Lemma}

\setlength{\textwidth}{6.3in}
\setlength{\textheight}{9.3in}
\setlength{\topmargin}{-0.01in}
\setlength{\headsep}{0pt}
\setlength{\headheight}{0pt}
\setlength{\oddsidemargin}{0pt}
\setlength{\evensidemargin}{0pt}

\begin{document}
\title{\bf Wiener index of graphs with fixed number of pendant or cut vertices}
\author{Dinesh Pandey\footnote{Supported by UGC Fellowship scheme (Sr. No. 2061641145), Government of India} \and Kamal Lochan Patra}
\date{}
\maketitle

\begin{abstract}
The Wiener index of a connected graph is defined as the sum of the distances between all unordered pair of its vertices. In this paper, we characterize the graphs which extremize the Wiener index among all graphs on $n$ vertices with $k$  pendant vertices. We also characterize the graph which minimizes the Wiener index over the graphs on $n$ vertices with $s$ cut vertices.\\

\noindent {\bf Key words:} Unicyclic graph; Distance; Wiener index; Pendant vertex; Cut vertex\\

\noindent {\bf AMS subject classification.} 05C05; 05C12; 05C35

\end{abstract}

\section{Introduction} 

Throughout this paper, graphs are finite, simple, connected and undirected. Let $G$ be a graph with vertex set $V(G)$ and edge set $E(G).$ For a vertex $v \in V(G),$ $ N_G(v)$ denotes the set of all neighbours of $v$ in $G.$ A vertex of degree one is called a {\it pendant vertex}. A vertex $v$ of $G$ is called a {\it cut-vertex} if $G\setminus v$ is disconnected. The distance  between two vertices  $ u,v \in V(G)$, denoted by $d_G(u,v)$ or $d(u,v)$ (if the context is clear), is the number of edges in a shortest path joining $u$ and $v$. The distance of a vertex $v \in V(G),$ denoted by $D_G(v)$, is defined as $D_G(v)= \sum_{u \in V(G)} d_G(u,v).$ We refer to \cite{We} for undefined notations and terminologies.

The {\it Wiener index} of $G$, denoted by $W(G)$, is defined as the sum of distances between all unordered pair of its vertices. $\it{i.e.}$ $$W(G)=\underset{\{u,v\}\subseteq V(G)}{\sum}d_G(u,v)=\frac{1}{2}\underset{v\in V(G)}{\sum}D_G(v).$$

Different names such as {\it graph distance} \cite{Ejs}, {\it transmission} \cite{Jp}, {\it total status} \cite{Buc} and {\it sum of all distances} \cite{G,Yg} have been used to study the graphical invariant $W(G)$. Apparently, the chemist H. Wiener was the first to point out in 1947 (see \cite{Wi}) that $W(G)$ is well correlated with certain physio-chemical properties of the organic compound from which $G$ is derived. The {\it mean distance}  \cite{D1,W1} or {\it the average distance} \cite{A,D} between the vertices is a quantity closely related to $W(G).$ By considering $G$ as an interconnection network connecting many processors, the average distance of $G$ between the nodes of the network is a measure of the average delay for traversing the messages from one node to another.

In Mathematical literature, the Wiener index is first studied by Entringer et al. in \cite{Ejs}. This gave an important direction to the researchers to characterize the graphs with extremal Wiener index in certain classes of graphs. In last $20$ years a lot of studies for the optimal graphs in different classes of trees and unicyclic graphs have been done (see \cite{Hlw,Jt,Lp,T,Twl,W,Wg,Yf,Zx}).
Apart from trees and unicyclic graphs, some other classes of graphs are also studied for the characterization of graphs having extremal Wiener index. Wiener index of graphs with fixed maximum degree is studied in \cite{S}. The graphs with maximum and minimum Wiener index among all Eulerian graphs on $n$ vertices are characterized in \cite{Gcr}.

Wiener index of unicyclic graphs with fixed number of pendent vertices or cut vertices is studied in \cite{Twl}. In this paper, we characterize the graphs having maximum and minimum Wiener index over all connected graphs on $n$ vertices with $k$ pendant vertices. We also obtain the graph which minimizes the Wiener index among all connected graphs on $n$ vertices with $s$ cut-vertices.

\subsection{Main results}

We first construct some classes of graphs. For $g < n$, let $U_{n,g}^p$ be the graph obtained by attaching $n-g$ pendant vertices at one vertex of the cycle $C_g$ and $U_{n,g}^l$ be the graph obtained by joining an edge between a pendant vertex of the path $P_{n-g}$ with a vertex of $C_g$. 

Let $ \mathfrak{H}_{n,k} $ denote the class of all connected graphs on $n$ vertices and $k$ pendant vertices. Let $\mathfrak{T}_{n,k}$ be the subclass of $ \mathfrak{H}_{n,k} $ containing all the trees on $n$ vertices and $k$ pendant vertices.  

The path $ [v_1 v_2\ldots v_n]$ on $n$ vertices is denoted by $P_n$. For positive integers $k,l,d$ with $n=k+l+d$, let $T(k,l,d)$ be the tree  obtained by taking the path  $P_d$  and adding $k$ pendant vertices adjacent to $v_1$ and $l$ pendant vertices adjacent to $v_d$. Note that $T(1,1,d)$ is a path on $d+2$ vertices. 

 We define a specific subclass of graphs in $ \mathfrak{H}_{n,0} $ as follows. Let $m_1,m_2$ and  $n$ be positive integers with $m_1,m_2\geq 3$ and $n \geq m_1+m_2-1.$ If $n > m_1+m_2-1,$ take a path on $n-(m_1+m_2)+2$ vertices and identify one pendant vertex of the path with a vertex of $C_{m_1}$ and another pendant vertex with a vertex of $C_{m_2}.$ If $n= m_1+m_2-1$, then identify one vertex of $C_{m_1}$ with a vertex of $C_{m_2}.$ We denote this graph by $C_{m_1,m_2}^n$. 
 
In this paper, we prove the following results:
\begin{theorem}\label{max-thm0}
Let $0\leq k \leq n-2$ and let $G \in \mathfrak {H}_{n,k}$.  Then
\begin{enumerate}
\item[(i)] for $2 \leq k \leq n-2,$ $W(G)\leq W\left(T(\lfloor\frac{k}{2}\rfloor,\lceil \frac{k}{2}\rceil,n-k)\right)$  and equality happens if and only if $G= T(\lfloor\frac{k}{2}\rfloor,\lceil \frac{k}{2}\rceil,n-k).$ Furthermore, $W\left(T(\left\lfloor\frac{k}{2}\right\rfloor,\left\lceil \frac{k}{2}\right\rceil,n-k)\right)=$

$$\begin{cases}{n-k+1 \choose 3}+\frac{k^2}{4}(n-k+3)+\frac{k}{2}[(n-k)^2+n-k-2] &\mbox{if k is even} \\ 
{n-k+1 \choose 3}+\frac{k^2-1}{4}(n-k+3)+\frac{k}{2}[(n-k)^2+n-k-2]+1 & \mbox{if k is odd.}\end{cases}$$

\item[(ii)] for $k=1,$ $W(G) \leq W(U_{n,3}^l)$  and equality holds if and only if $G=U_{n,3}^l.$ Furthermore, $$W(U_{n,3}^l)=\frac{n^3-7n+12}{6}.$$

\item[(iii)] for $k=0$ and $n\geq 7$, $W(G)\leq W(C_{3,3}^n)$ and equality holds if and only if $G=C_{3,3}^n.$ Furthermore, $$W(C_{3,3}^n)=\frac{n^3-13n+24}{6}.$$
\end{enumerate}
\end{theorem}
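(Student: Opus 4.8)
The three parts run on the same machinery. Two elementary facts do the heavy lifting: \textbf{(E1)} if an edge $e$ lies on a cycle of a connected graph $G$, then $G-e$ is connected and $W(G-e)>W(G)$, because the ends of $e$ are pushed to distance at least $2$ while no other distance decreases; and \textbf{(E2)} a grafting principle --- relocating a pendant path, or the internal path lying strictly between two vertices of degree $\ge 3$, toward a more peripheral attachment point strictly increases $W$, so inside a tree the Wiener index grows as branch vertices are coalesced toward the two ends of a longest path. Since every move below is strict, the same arguments also deliver the ``if and only if'' clauses. The one point requiring care, isolated at the end, is that (E1) cannot be applied blindly: deleting a cycle edge generically creates or destroys pendant vertices and so escapes $\mathfrak{H}_{n,k}$.

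\textbf{Part (i) ($2\le k\le n-2$).} I would first show that a $W$-maximizer in $\mathfrak{H}_{n,k}$ is a tree. If $G\in\mathfrak{H}_{n,k}$ has a cycle $C$, then $k\ge 2$ rules out $G=C_n$, so $C$ contains a vertex of degree $\ge 3$; combining this with the presence of a pendant vertex, one builds a compound move --- delete a carefully chosen non-bridge edge of $C$ and simultaneously re-route a pendant path --- under which $n$ and the pendant count $k$ are \emph{exactly} preserved, the cyclomatic number drops, and, by (E1), $W$ strictly increases. Iterating puts the optimizer in $\mathfrak{T}_{n,k}$, where (E2) forces it to be a double broom $T(a,k-a,n-k)$ for some $1\le a\le k-1$. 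To pin down $a$, split the pairs of vertices of $T(a,k-a,d)$ (with $d=n-k$ fixed) into path--path, leaf--leaf, leaf--path, and the $a(k-a)$ opposite-end leaf pairs at distance $d+1$, and sum: this gives $W\bigl(T(a,k-a,d)\bigr)=(d-1)\,a(k-a)+C$ with $C$ independent of $a$, so, as $d-1\ge 1$, the unique maximum is at $a=\lfloor k/2\rfloor$. Substituting $a=\lfloor k/2\rfloor$, $k-a=\lceil k/2\rceil$, $d=n-k$ into the same count and simplifying, with the two cases according to the parity of $k$, gives the stated closed form.

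\textbf{Parts (ii) and (iii) ($k=1$ and $k=0$).} A degree sum yields $2|E(G)|\ge k+2(n-k)$, hence cyclomatic number $\ge 1$ when $k=1$; when $k=0$ the only unicyclic graph in the class is $C_n$, and a direct comparison shows $W(C_n)<W(C_{3,3}^n)$ for $n\ge 7$, so a maximizer then has cyclomatic number $\ge 2$. Running the compound cycle-reduction whenever the cyclomatic number exceeds this minimum brings $G$ to a unicyclic graph ($k=1$) or a bicyclic one ($k=0$). It remains to fix the shape: by (E2) the acyclic part is straightened into a single pendant path attached at one cycle vertex ($k=1$), or into a single path joining the two cycles at one vertex of each ($k=0$); and a cycle-shrinking move --- for a cycle of length $\ge 4$, delete the two cycle edges at a degree-$2$ cycle vertex $v$, add the edge between its two (necessarily non-adjacent) former neighbours, and re-insert $v$ as the new tip of the pendant path ($k=1$) or by subdividing a path edge ($k=0$), all of which keeps $n$ and $k$ fixed while raising $W$ --- reduces every cycle to a triangle. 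This produces $U_{n,3}^l$ and $C_{3,3}^n$ respectively; the hypothesis $n\ge 7$ in (iii) is precisely what makes $C_{3,3}^n$ strictly beat $C_n$, hence the unique extremal graph (at $n=6$ the two tie). The values $W(U_{n,3}^l)=\frac{n^3-7n+12}{6}$ and $W(C_{3,3}^n)=\frac{n^3-13n+24}{6}$ follow from direct distance sums; for example $U_{n,3}^l$ is a path on $n-1$ vertices with one extra vertex adjacent to its first two, so $W(U_{n,3}^l)=\binom{n}{3}+\binom{n-1}{2}+1$ after regrouping.

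\textbf{Where the difficulty lies.} The crux is the cycle-reduction surgery. Plain edge deletion via (E1) changes the pendant count, so one has to engineer compound moves --- a deletion together with a re-routing or merging of a pendant path, and, for $k=0$, moves that also keep the minimum degree at $2$ (the hardest case, since there is no pendant vertex to absorb the surgery) --- that preserve $n$ and $k$ exactly while strictly increasing $W$, and then check the accompanying distance inequalities. Once the optimizer is known to be a double broom (respectively a triangle-capped path or a two-triangle path joined by a path), the selection of the balanced parameters via (E2) and the final closed-form evaluations are routine.
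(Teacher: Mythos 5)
Your proposal leaves its self-declared crux unproven, and that gap is real. For part (i) you assert a ``compound move'' that deletes a non-bridge edge and simultaneously re-routes a pendant path so that $n$ and $k$ are exactly preserved while $W$ strictly increases ``by (E1)''; but (E1) only covers pure edge deletion, and once you superimpose a re-routing the net effect on $W$ is not controlled by (E1) at all, so the move is never actually constructed or verified. The situation is worse in part (iii): for $k=0$ there is no pendant path to absorb the surgery (you say so yourself), and you give no mechanism for reducing the cyclomatic number of a pendant-free graph --- e.g.\ $K_n$, or any $2$-connected graph, or a theta graph, where every cycle edge has an endpoint of degree $2$ so deletion inevitably creates a pendant vertex. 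Your reduction target ``two cycles joined at one vertex of each by a path'' also silently excludes the other pendant-free bicyclic shapes (theta graphs, two cycles sharing a vertex), and the shared-vertex spanning case $n=m_1+m_2-1$ needs its own comparison (the paper's Lemma \ref{c-cmn}). The paper instead disposes of all $2$-connected graphs in one stroke via Plesnik's theorem ($W(G)\le W(C_n)$, Theorem \ref{plesnic}) together with $W(C_n)\le W(C_{3,3}^n)$ for $n\ge 6$ (Lemma \ref{3cycles}), and handles the cut-vertex case by surgeries that do not try to preserve the pendant count.

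That is the missing idea: you do not need to stay inside $\mathfrak{H}_{n,k}$ at intermediate steps; only the final comparison graph must lie in the class. For $2\le k\le n-2$ the paper passes to a spanning tree (the leaf count may jump above $k$), then grafts edges (Corollary \ref{effect-2}), each grafting strictly increasing $W$ and lowering the leaf count by one, until a tree with exactly $k$ leaves is reached, and then cites Shi's theorem (Theorem \ref{pmax-thm1}); for $k=1$ it simply compares with the maximum over \emph{all} unicyclic graphs (Theorem \ref{pmax-thm3}), which happens to be $U_{n,3}^l\in\mathfrak{H}_{n,1}$. This eliminates exactly the $k$-preserving surgery you identify as hard. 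Note also that your (E2) step ``inside a tree the maximizer with $k$ leaves is a double broom'' is itself Shi's theorem and is only sketched. The parts of your write-up that do check out are the optimization over $a$ via $W\bigl(T(a,k-a,d)\bigr)=(d-1)a(k-a)+C$ (consistent with (\ref{eq4})), the closed-form evaluations, and the $n=6$ tie remark; but as it stands the structural reduction, which is the heart of the theorem, is not established.
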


For $ 0\leq k \leq n-3 $ and $ n \geq 4,$ let $P_n^k $ be the graph obtained by adding $k$ pendant vertices at one vertex of the complete graph $K_{n-k}.$

\begin{theorem}\label{min-thm0}
Let $0\leq k \leq n-2$ and let $G \in \mathfrak {H}_{n,k}$. Then
\begin{enumerate}
\item[(i)] for $0 \leq k \leq n-3$, $ W(P_n^k) \leq W(G) $  and equality holds if and only if $ G=P_{n}^k $. Furthermore,  $$W(P_n^k)= {n-k \choose 2}+k^2+2k(n-k-1).$$

\item[(ii)] for $k=n-2,$ $W(T(1,n-3,2)) \leq W(G)$  and equality holds if and only if $G=T(1,n-3,2)$. Furthermore, $$W(T(1,n-3,2))=n^2-n-2.$$
\end{enumerate}
\end{theorem}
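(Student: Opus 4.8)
The plan is to build on two elementary facts. First, inserting a new edge into a connected graph strictly decreases its Wiener index: for $e \notin E(G)$ with $G$ connected we have $d_{G+e}(x,y) \le d_G(x,y)$ for all $x,y$, with strict inequality for the two endpoints of $e$ (their distance drops from at least $2$ to $1$), so $W(G+e) < W(G)$. Second, for $n \ge 3$ a pendant vertex of a connected graph cannot be adjacent to another pendant vertex (they would form a $K_2$ component), so every pendant vertex is joined to a non-pendant vertex.

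For part (i) I would write $V(G) = P \cup Q$, with $P$ the set of the $k$ pendant vertices and $Q$ the set of the $n-k \ge 3$ non-pendant vertices, and first reduce to the case where $Q$ induces $K_{n-k}$: completing $Q$ to a clique leaves every vertex of $Q$ of degree at least $2$ and the degrees in $P$ unchanged, so the new graph is again in $\mathfrak{H}_{n,k}$ and, by the first fact, has no larger Wiener index, strictly smaller unless $Q$ was already complete. Assuming $Q = K_{n-k}$, each pendant vertex is joined to a single vertex of $Q$; I would let $a_1,\dots,a_{n-k}$ be the resulting pendant-multiplicities, so $\sum_j a_j = k$. Since the distances inside $Q$ and the distances between a pendant vertex and a vertex of $Q$ do not depend on $(a_j)$, while two pendant vertices are at distance $2$ or $3$ according as they are attached to the same vertex of $Q$ or not, a short count should give
\[
W(G) = \binom{n-k}{2} + k(2n-2k-1) + 3\binom{k}{2} - \sum_{j=1}^{n-k}\binom{a_j}{2}.
\]
Then, using $\binom{a}{2} + \binom{b}{2} \le \binom{a+b}{2}$ with equality if and only if $a=0$ or $b=0$, the quantity $\sum_j \binom{a_j}{2}$ is maximized --- and hence $W(G)$ minimized --- exactly when all pendant vertices sit at one vertex of $Q$, that is, when $G = P_n^k$. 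Substituting $\sum_j \binom{a_j}{2} = \binom{k}{2}$ and simplifying should reproduce the claimed value $\binom{n-k}{2} + k^2 + 2k(n-k-1)$, and combining with the reduction step gives uniqueness in all of $\mathfrak{H}_{n,k}$.

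For part (ii) I would use that $k = n-2$ forces $Q = \{u_1, u_2\}$ to have exactly two vertices, with every pendant vertex joined to $u_1$ or to $u_2$. Connectivity forces $u_1 u_2 \in E(G)$: otherwise no $u_1$--$u_2$ path exists, since all other vertices have degree $1$. A vertex $u_i$ with no pendant neighbour would then itself have degree $1$, so both $u_1$ and $u_2$ have at least one pendant neighbour; writing $a$ and $b$ for these numbers we get $a, b \ge 1$, $a+b = n-2$, and $G$ is the double star $T(a,b,2)$. A direct count of distances should give $W(T(a,b,2)) = (n-1)^2 + ab$, which is least exactly when $ab$ is least, i.e. $\{a,b\} = \{1, n-3\}$, yielding $W = n^2-n-2$ and the unique optimizer $T(1,n-3,2)$.

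I expect the distance counts to pose no real difficulty; the points that need care are the structural reductions --- checking that completing $Q$ to a clique preserves the pendant set exactly, so that membership in $\mathfrak{H}_{n,k}$ is retained, that no pendant vertex attaches to another pendant vertex (which uses $n \ge 3$), and, in part (ii), that the edge $u_1 u_2$ is genuinely forced --- together with a little bookkeeping for the small cases $k \in \{0,1\}$ in part (i), where completing $Q$ already pins down $G$ up to isomorphism.
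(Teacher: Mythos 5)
Your argument is correct, and its first half coincides with the paper's: the paper also begins by joining all non-adjacent non-pendant vertices, observing that this stays inside $\mathfrak{H}_{n,k}$ and strictly lowers $W$ by the edge-addition lemma. Where you diverge is in the second half. The paper finishes both parts by invoking its Corollary \ref{effect-1} (derived from the component-moving Lemma \ref{effect-0} of Liu--Lu), which asserts that consolidating the pendant vertices at one of two given vertices lowers the Wiener index, and then identifies the resulting graph with $P_n^k$ (resp.\ $T(1,n-3,2)$). You instead compute $W$ in closed form on the reduced configurations: for (i), with the non-pendant set a clique and attachment multiplicities $a_1,\dots,a_{n-k}$, your formula $W=\binom{n-k}{2}+k(2n-2k-1)+3\binom{k}{2}-\sum_j\binom{a_j}{2}$ is correct, and maximizing $\sum_j\binom{a_j}{2}$ by the superadditivity of $\binom{\cdot}{2}$ gives both the optimizer and the stated value; for (ii), your derivation that $G$ must be the double star $T(a,b,2)$ (the center edge being forced by connectivity) and the identity $W(T(a,b,2))=(n-1)^2+ab$ are correct and agree with the paper's formula (\ref{eq4}), so minimizing $ab$ with $a,b\ge 1$, $a+b=n-2$ gives $T(1,n-3,2)$ and $n^2-n-2$. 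Your route is more self-contained and handles the equality case more transparently: the paper's Corollary \ref{effect-1} is only an ``or'' statement about two vertices at a time, so the paper's one-line appeal to it for uniqueness is terser than your explicit convexity argument, and in (ii) you actually justify the structural claim (``$G$ is a double star'') that the paper merely asserts. The trade-off is that the paper's lemma-based argument reuses machinery needed elsewhere, while yours requires the two explicit distance counts, both of which check out.
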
 
Let $T_{n,k} \in \mathfrak{T}_{n,k} $ be the tree that has a vertex $v$ of degree $k$ and $T_{n,k}\setminus v = r P_{q+1} \cup (k-r) P_q$, where $q=\lfloor\frac{n-1}{k} \rfloor$ and $r = n-1 -kq$. Here, we have $0\leq r <k.$

\begin{theorem}\label{min-thm1}
Let $2\leq k\leq n-2$ and  $T \in \mathfrak{T}_{n,k}.$ Then $W(T_{n,k}) \leq W(T)$  and equality holds if and only if $T=T_{n,k}.$ 
\end{theorem}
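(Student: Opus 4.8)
The plan is a two-stage reduction. First I will show that any tree attaining the minimum of $W$ over $\mathfrak T_{n,k}$ must be a \emph{spider}, that is, a tree with a single vertex of degree at least three from which $k$ pendant paths emanate; then I will show that among all spiders on $n$ vertices with $k$ legs the most balanced one, which is precisely $T_{n,k}$, is the unique minimiser. Both stages use the edge form $W(T)=\sum_{e\in E(T)}n_1(e)n_2(e)$, where $n_1(e),n_2(e)$ are the orders of the two components of $T-e$, together with the following local rule: if a forest $S$ of pendant paths with $|S|=s$ is detached from a vertex $x$ of $T$ and re-attached at a vertex $y$, then the only edges whose contribution changes are those on the $x$-$y$ path $P(x,y)$, and along $P(x,y)$ the side of $T-e$ containing $x$ loses $s$ vertices while the side containing $y$ gains $s$; hence, writing $n_x(e)$ for the order of the side containing $x$,
\[ \Delta W \;=\; \sum_{e\in P(x,y)} s\bigl(2\,n_x(e)-n-s\bigr). \]
A spider with $k\ge 3$ legs has exactly one branch vertex, while $\mathfrak T_{n,2}=\{P_n\}=\{T_{n,2}\}$ makes the case $k=2$ trivial, so assume $k\ge 3$.

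\textbf{Step 1: a minimiser is a spider.} Suppose $T\in\mathfrak T_{n,k}$ has at least two branch vertices, and let $\mathcal B\subseteq T$ be the subtree induced by the branch vertices of $T$ together with all vertices lying on paths between branch vertices. Then $\mathcal B$ is a tree with at least two vertices, and an elementary argument shows that each leaf of $\mathcal B$ is a branch vertex of $T$ having exactly one branch that meets $\mathcal B$ and at least two branches that are pendant paths. Choose a leaf $v$ of $\mathcal B$ for which $s_v$, the total number of vertices on the pendant paths at $v$, is smallest; let $C_1$ be a shortest such pendant path, $c_1=|C_1|$; let $w$ be the branch vertex of $T$ nearest $v$, at distance $m\ge 1$; and let $b$ be the order of the component of $T$ minus the last edge of the $v$-$w$ path that contains $w$. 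Detach from $v$ all its pendant paths except $C_1$ and re-attach all of them at $w$: the new tree is again in $\mathfrak T_{n,k}$ (the moved leaves stay leaves, $v$ becomes a degree-two vertex, $w$ stays a branch vertex) and has one fewer branch vertex. On the $v$-$w$ path the $i$-th edge has $n_v=s_v+i$, so the displayed formula with $s=s_v-c_1$ gives
\[ \Delta W \;=\; (s_v-c_1)\,m\,(c_1+1-b). \]
As $v$ has at least two pendant paths, $s_v\ge 2c_1$, hence $s_v-c_1\ge c_1\ge 1$, and it remains to verify $b\ge c_1+2$. Since $\mathcal B$ is a tree with at least two vertices it has a leaf $v'\ne v$, and because every branch vertex other than $v$ --- in particular $v'$ together with all of its pendant paths --- lies in the component of order $b$, the minimality of $s_v$ gives $b\ge 1+s_{v'}\ge 1+s_v\ge 1+2c_1\ge c_1+2$. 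Thus $\Delta W<0$, and iterating forces any minimiser to be a spider.

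\textbf{Step 2: among spiders $T_{n,k}$ is best.} Write $S(b_1,\dots,b_k)$ for the spider with legs of lengths $b_1,\dots,b_k$, where $\sum_i b_i=n-1$; expanding over edges shows $W(S(b_1,\dots,b_k))=\sum_{i=1}^k f(b_i)$ for one cubic $f$ depending only on $n$. If $b_i\ge b_j+2$, move the leaf ending leg $i$ to the end of leg $j$; a short computation from the displayed formula (with $s=1$) gives
\[ \Delta W \;=\; (b_j+1)(n-b_j-1)-b_i(n-b_i)\;=\;(b_i-b_j-1)\bigl(b_i+b_j+1-n\bigr). \]
Here $b_i-b_j-1\ge 1$ and $b_i+b_j+1-n=b_i+b_j-\sum_\ell b_\ell<0$, because $k\ge 3$ forces $\sum_{\ell\ne i,j}b_\ell\ge 1$; so $\Delta W<0$. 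Hence a minimising spider has all leg lengths pairwise within $1$, and the only such spider on $n$ vertices with $k$ legs has $r$ legs of length $q+1$ and $k-r$ of length $q$, with $q=\lfloor(n-1)/k\rfloor$ and $r=n-1-kq$ --- that is, $T_{n,k}$. With Step 1 this yields $W(T_{n,k})\le W(T)$ for every $T\in\mathfrak T_{n,k}$, with equality only at $T=T_{n,k}$.

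The parts I would leave as routine verifications are that a re-attachment changes only path edges, the structural claim about leaves of $\mathcal B$, the identity $n_v=s_v+i$, and the two displayed $\Delta W$ formulas. The point needing a genuine idea --- and the main obstacle --- is the choice of $v$ in Step 1: taking a leaf of $\mathcal B$ with the smallest pendant-path total is exactly what makes $b\ge c_1+2$, hence $\Delta W<0$, automatic, no matter how long the pendant paths of $T$ are; a careless choice of branch vertex can make the analogous transformation \emph{increase} $W$.
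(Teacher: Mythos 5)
Your proof is correct: I checked both displayed $\Delta W$ formulas (with $n=s_v+m+b$ the first sum collapses to $(s_v-c_1)\,m\,(c_1+1-b)$, and the second factors as claimed), and the facts you leave as routine do hold, including that each move preserves the number of pendant vertices, so the modified tree stays in $\mathfrak{T}_{n,k}$. Your two-stage skeleton --- any minimiser is a spider, and among spiders with $k$ legs the balanced one is the unique optimum --- is exactly the paper's, but the implementation of both stages is different. For the spider reduction the paper invokes Lemma \ref{effect-0} (the component-shifting lemma of Liu and Lu): given two branch vertices $u,v$ it reattaches the middle branches of both either all at $u$ or all at $v$, and the lemma guarantees that one of the two choices strictly decreases $W$, without ever deciding which; you instead make the decision explicitly --- the leaf of the branch-core with smallest pendant-path total, pushed toward the nearest branch vertex --- and verify the sign by the edge-cut expansion $W=\sum_{e}n_1(e)n_2(e)$, a tool the paper never uses. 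For the balancing stage the paper simply cites the grafting inequality (Corollary \ref{effect-2}), whereas you recompute the relevant special case (moving one leaf between legs) directly. The paper's route is shorter given its toolbox of quoted lemmas; yours is self-contained, gives an explicit quantitative decrease at each step, and correctly identifies the one delicate point, namely that an arbitrary choice of branch vertex can make the analogous move increase $W$ --- which is precisely the difficulty the either/or structure of Lemma \ref{effect-0} is designed to sidestep, and which your extremal choice of $v$ resolves instead.
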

 Let $\mathfrak{C_{n,s}}$ be the set of all connected graphs on $n$ vertices and $s$ cut vertices. For $2\leq m\leq n,$ let $v_1,v_2,\ldots,v_m$ be the vertices of a complete graph $K_m$. For $i=1,2,\ldots,m$ consider the paths $P_{l_i}$ such that $l_1+l_2+\cdots+l_m=n$. Identify a pendant vertex of the path $P_{l_i}$ with the vertex $v_i,$ for $i=1,2,\ldots,m$ to obtained a graph on $n$ vertices and we denote it by $K_m^n(l_1,l_2,\ldots, l_m)$.

\begin{theorem}\label{min-thm2}
 Let $0\leq s\leq n-3$ and $i,j\in \{1,2,\ldots,n-s\}.$ Then  the graph $K_{n-s}^n(l_1,l_2,\ldots,l_{n-s})$ with $|l_i-l_j|\leq 1$ has the minimum Wiener index over $\mathfrak{C}_{n,s}.$ 
\end{theorem}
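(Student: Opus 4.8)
The plan is to fix a graph $G\in\mathfrak{C}_{n,s}$ minimising $W$ and, by a chain of local surgeries each preserving the numbers of vertices and of cut vertices while strictly decreasing $W$, to force $G$ to be $K_{n-s}^{n}(l_1,\ldots,l_{n-s})$ with the $l_i$ as equal as possible; strictness of the surgeries then yields uniqueness as well. Note that $m:=n-s\ge3$, since $s\le n-3$. The first step is to observe that every block of $G$ is complete: if a $2$-connected block $B$ is not, add all missing edges inside $V(B)$. Since no edge leaves $V(B)$, this neither merges $B$ with another block nor creates a new cut vertex, so the block--cut tree and the cut-vertex set — hence membership in $\mathfrak{C}_{n,s}$ — are unchanged, while some intra-$B$ distance drops to $1$ and none grows, so $W$ strictly decreases. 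Thus in the minimiser every block is a clique; moreover it is not a tree, because a tree in $\mathfrak{C}_{n,s}$ with $s\le n-3$ has at least $3$ leaves, so by Theorem~\ref{min-thm1} the best such tree is $T_{n,n-s}$, and a direct comparison gives $W(T_{n,n-s})>W\big(K_{n-s}^{n}(l_1,\ldots,l_{n-s})\big)$ for the balanced choice.

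Next I would show the minimiser has a unique block of size $\ge3$. Let $B_0\cong K_m$ be a largest block, and suppose $A\cong K_a$ is another block with $a\ge3$; let $c$ be the cut vertex of $A$ lying on the block--cut path toward $B_0$. Dissolve $A$: delete all edges of $A$ except one edge $cx^{*}$, and join every vertex of $A\setminus\{c,x^{*}\}$ (together with whatever it carries) to all of $B_0$. Then $A$ has become a pendant edge, $B_0$ has grown to $K_{a+m-2}$, the vertex $c$ is still a cut vertex and every other cut vertex is unchanged, so $n$ and $s$ are preserved; a distance count — giving $W_{\mathrm{old}}-W_{\mathrm{new}}=(a-2)(m-2)>0$ in the bare two-clique case, the extra terms in the general case only reinforcing the inequality — shows $W$ strictly decreases. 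Iterating, $G=K_m$ together with a forest $F$ of trees hanging at clique vertices.

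A short block--cut count shows such a $G$ has exactly $n-s$ non-cut vertices, with $m=n-s$ holding precisely when no clique vertex carries two components of $F$ and every component of $F$ is a path attached by one endpoint — i.e.\ precisely when $G=K_{n-s}^{n}(l_1,\ldots,l_{n-s})$. If this fails I would straighten: if some non-cut vertex outside $B_0$ is \emph{safe} (its deletion un-cuts nobody — which occurs, e.g., when a clique vertex carries two hanging components, or when $F$ has an internal branch vertex), absorb it into $B_0$, lowering $W$ as above; in the remaining extremal configurations, detach a tail of a hanging path and re-graft it as a fresh pendant path at a clique vertex not yet carrying one (or at a pendant vertex), trading one cut vertex for one formerly non-cut vertex so that $s$ is unchanged, and decreasing $W$ by the standard linearisation estimate. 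A lexicographic potential such as $(n-|B_0|, |F|-\#\{\text{components}\}, |E(F)|)$ strictly decreases at each surgery, so the process terminates at $G=K_{n-s}^{n}(l_1,\ldots,l_{n-s})$. Finally, among these, if $l_i\ge l_j+2$ then moving the pendant end of the $l_i$-path onto the end of the $l_j$-path changes $W$ by a negative quantity proportional to $l_i-l_j-1$, so $W$ drops; hence at the optimum $|l_i-l_j|\le1$, which determines the multiset $\{l_i\}$ — and the graph — uniquely up to isomorphism.

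The hard part will be the surgeries of the middle two steps: arranging every one of them to preserve the cut-vertex count \emph{exactly} — notably when a relocated vertex carries its own pendant structure, or when a dissolved block is internal and meets several cut vertices — and then verifying the strict decrease of $W$ in full generality, rather than only in the clean two-clique and single-pendant models. This is a long but essentially mechanical distance bookkeeping; the block-completion step and the final balancing step are short by comparison.
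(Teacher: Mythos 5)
Your opening step (every block of a minimizer is complete) and your closing step (balancing the attached paths) are exactly the paper's Lemma \ref{c} and Lemma \ref{c_2}, and your cut-vertex bookkeeping for those two moves is fine. The gap is in the central reduction, where you dissolve every other clique into a \emph{largest} block $B_0$ and assert that the bare two-clique computation $W_{\mathrm{old}}-W_{\mathrm{new}}=(a-2)(m-2)$ is "only reinforced" by the extra terms. That assertion is false, and it is precisely the hard case. Take $B_0=K_m$ and $A=K_3$ sharing the cut vertex $c$, with $V(A)=\{c,z_1,z_2\}$, and attach a path on $N$ vertices at $z_1$ and another at $z_2$. All blocks are complete and $B_0$ is the largest block, so your surgery applies (with, say, $x^*=z_1$; by symmetry the other choice is the same). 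Moving $z_2$ together with its path next to $B_0$ increases by $1$ each of the $(N+1)^2$ distances between a vertex of $\{z_1\}\cup P_N$ and a vertex of $\{z_2\}\cup P_N$, while the only decreases are the $(N+1)(m-1)$ distances from $\{z_2\}\cup P_N$ to $B_0\setminus\{c\}$, each by $1$; the net change is $(N+1)(N+2-m)>0$ once $N\ge m-1$. So the surgery can strictly \emph{increase} $W$, and no choice of $x^*$ helps when every vertex of $A\setminus\{c\}$ carries a large pendant structure. Consequently your claim that the minimizer has a unique block of size at least $3$ is left unproven (the statement is true, but not by this move), and everything after it rests on that claim.

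The underlying issue is the selection criterion: the absorbing block must be chosen by \emph{centrality} in the block tree, not by size. This is exactly how the paper proceeds: Lemma \ref{c_0} forces each cut vertex into exactly two blocks, Lemma \ref{c_3} dissolves a pendant block into its neighbour (using $s\ge 2$ to find a third block that supplies the strict surplus), and Lemma \ref{c_1} moves the interior vertices of a non-central block one block toward the centre, where non-centrality guarantees that every distance that grows (toward the path left behind at $c_1$) is matched by a shrinking distance into an at-least-as-deep branch on the central side; a separate argument then settles the case of two central blocks. In the example above it is the pendant $K_m$ that must be dissolved into the central $K_3$, the opposite of your move. Your subsequent "straightening" of the hanging forest is also only sketched (the cut-vertex-preserving re-grafting needs care), but the decisive defect is the missing centrality-type compensation: without it the strict decrease of $W$ you rely on simply does not hold in general.
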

 In the next section we  will discuss some results related to Wiener index of graphs which are useful to prove our main theorems.

\section{Preliminaries} \label{section-2}

We start this section with the following lemma.
\begin{lemma} \label{ed}
Let $G$ be a graph and $u,v \in V(G)$ are non adjacent. Let $G'$ be the graph obtained from $G$ by joining the vertices $u$ and $v$ by an edge. Then $W(G')<W(G).$ 
\end{lemma}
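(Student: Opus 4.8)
The plan is to show that adding an edge cannot increase any pairwise distance, and strictly decreases at least one, so the total sum $W$ strictly decreases. First I would observe that $G$ is a spanning subgraph of $G'$, so every walk in $G$ is a walk in $G'$; hence for every pair $\{x,y\}\subseteq V(G)=V(G')$ we have $d_{G'}(x,y)\le d_G(x,y)$, because a shortest $x$--$y$ path in $G$ is still available in $G'$. Summing this inequality over all unordered pairs gives $W(G')\le W(G)$.

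Next I would argue the inequality is strict. Since $u$ and $v$ are non-adjacent in $G$, the distance $d_G(u,v)\ge 2$ (and is finite because $G$ is connected, as assumed throughout the paper). In $G'$ the new edge $uv$ gives $d_{G'}(u,v)=1$. Therefore the term corresponding to the pair $\{u,v\}$ strictly decreases, i.e. $d_{G'}(u,v)=1<2\le d_G(u,v)$, while no other term increases. Combining with the termwise inequality from the first step, $W(G')=\sum_{\{x,y\}}d_{G'}(x,y)<\sum_{\{x,y\}}d_G(x,y)=W(G)$.

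I would also note, for completeness, that $G'$ is connected (it contains the connected spanning subgraph $G$), so $W(G')$ is well defined. There is no real obstacle here; the only thing to be careful about is justifying the termwise inequality $d_{G'}(x,y)\le d_G(x,y)$ cleanly — this is immediate from the subgraph relation, since any $x$--$y$ path in $G$ is also a path in $G'$ and shortest paths can only get shorter when edges are added — and making sure the strict drop at the pair $\{u,v\}$ is genuinely available, which follows from $u\not\sim v$ in $G$ forcing $d_G(u,v)\ge 2$.
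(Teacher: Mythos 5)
Your argument is correct and is exactly the standard one: adding the edge $\{u,v\}$ cannot increase any distance, and it strictly decreases $d(u,v)$ from at least $2$ to $1$, so $W(G')<W(G)$. The paper states this lemma without proof, treating it as immediate, so your write-up simply supplies the routine justification the authors omitted.
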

 It follows from Lemma \ref{ed}  that among all connected graphs on $n$ vertices, the Wiener index is minimized by the complete graph $K_n$  and maximized by a tree. Among all trees on $n$ vertices, the Wiener index is minimized by the star $K_{1,n-1}$ and maximized by the path $P_n$ (see \cite{We}, Theorem 2.1.14).
It is easy to determine the Wiener index of the following graphs(see \cite{We}): (i)$W(K_n)={n \choose  2}$ (ii) $W(P_n)={n+1 \choose  3}$ (iii) $W(K_{1,n-1})=(n-1)^2.$ The Wiener index of the cycle $C_n$ is (see \cite{Jp},Theorem 5)
\begin{equation}\label{eq-c1}
W(C_n)=\begin{cases}  \frac{1}{8}n^3 &\textit{if n is even}\\  \frac{1}{8}n(n^2-1) & \textit{if n is odd.}\end{cases}
\end{equation}
Also for $u\in V(C_n)$

\begin{equation}\label{eq-c2}
D_{C_n}(u)=\begin{cases}\frac{n^2}{4} &\textit{if n is even} \\ 
\frac{n^2-1}{4} & \textit{if n is odd.}\end{cases}
\end{equation}
 
The following lemma is very useful.
\begin{lemma} (\cite{Bsv},Lemma 1.1)\label{count}
Let $G$ be a graph and $u$ be a cut vertex in $G$. Let $G_1$ and $G_2$ be two subgraphs of $G$ with $G= G_1 \cup G_2$ and $V(G_1) \cap V(G_2)= \{u\}$. Then
$$W(G)=W(G_1)+W(G_2)+(|V(G_1)|-1)D_{G_2}(u)+(|V(G_2)|-1)D_{G_1}(u).$$
\end{lemma}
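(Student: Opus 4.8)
The plan is to partition the unordered vertex pairs of $G$ according to where their endpoints sit relative to the cut vertex $u$, and to exploit the fact that the cut structure forces shortest paths to respect the decomposition. Write $A = V(G_1) \setminus \{u\}$ and $B = V(G_2) \setminus \{u\}$, so that $V(G)$ is the disjoint union of $A$, $B$ and $\{u\}$. Every unordered pair of vertices of $G$ then falls into exactly one of three families: both endpoints in $V(G_1)$, both endpoints in $V(G_2)$, or one endpoint in $A$ and the other in $B$. Since $u$ belongs to both $V(G_1)$ and $V(G_2)$, a pair $\{u,x\}$ with $x \in A$ is counted only in the first family and a pair $\{u,y\}$ with $y \in B$ only in the second, so there is no overcounting and the three families really do partition all pairs.

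First I would treat the two internal families. The key structural fact is that, because $u$ is a cut vertex with $G = G_1 \cup G_2$ and $V(G_1)\cap V(G_2)=\{u\}$, the only way to pass between the two sides is through $u$. Hence any path in $G$ joining two vertices of $V(G_1)$ that visits a vertex of $B$ must enter and leave $G_2$ through $u$, and the resulting excursion (a closed $u$–$u$ subwalk) can be deleted without increasing the length; combined with $G_1$ being a subgraph of $G$, this yields $d_G(x,y)=d_{G_1}(x,y)$ for all $x,y\in V(G_1)$, and symmetrically $d_G(x,y)=d_{G_2}(x,y)$ for all $x,y\in V(G_2)$. Summing over the first family therefore gives exactly $W(G_1)$ and over the second exactly $W(G_2)$. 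This is the step I expect to be the main (and essentially only) obstacle: one must argue cleanly that distances measured in $G$ agree with distances measured in the pieces, and that no shortest path between same-side vertices ever benefits from crossing $u$. Everything after this is bookkeeping.

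It then remains to evaluate the cross family. For $x\in A$ and $y\in B$, every $x$–$y$ path in $G$ passes through the cut vertex $u$, so $d_G(x,y)=d_{G_1}(x,u)+d_{G_2}(u,y)$. Summing and separating the two additive pieces, I would write
$$\sum_{x \in A}\sum_{y \in B} d_G(x,y) = \sum_{x \in A}\sum_{y \in B}\big(d_{G_1}(x,u)+d_{G_2}(u,y)\big) = |B|\sum_{x \in A} d_{G_1}(x,u) + |A|\sum_{y \in B} d_{G_2}(u,y).$$
Because $d_{G_1}(u,u)=0$, the inner sum $\sum_{x\in A} d_{G_1}(x,u)$ equals $D_{G_1}(u)$, and likewise $\sum_{y\in B} d_{G_2}(u,y)=D_{G_2}(u)$; moreover $|A|=|V(G_1)|-1$ and $|B|=|V(G_2)|-1$. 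Substituting gives the cross contribution $(|V(G_2)|-1)D_{G_1}(u)+(|V(G_1)|-1)D_{G_2}(u)$. Adding the three contributions $W(G_1)$, $W(G_2)$ and this cross term reproduces the claimed identity, the two boundary terms appearing exactly as $(|V(G_1)|-1)D_{G_2}(u)$ and $(|V(G_2)|-1)D_{G_1}(u)$.
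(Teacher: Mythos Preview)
Your argument is correct and is the standard proof of this cut-vertex decomposition formula: partition the unordered pairs according to which side of $u$ they lie on, use that $u$ separates $G_1$ from $G_2$ to see that distances within each piece agree with distances in $G$, and for cross pairs split $d_G(x,y)=d_{G_1}(x,u)+d_{G_2}(u,y)$ and sum. The paper itself does not prove this lemma; it is quoted from \cite{Bsv}, so there is nothing further to compare.
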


\begin{corollary}\label{new1}
Let $G$ and $H$ be two connected graphs having at least $2$ vertices each. Let $u,v \in V(G)$ and $w \in V(H)$. Let $G_1$ and $G_2$ be the graphs obtained from $G$ and $H$ by identifying the vertex  $w$ of $H$ with the vertices $u$ and $v$ of $G$, respectively. If $D_G(v)\geq D_G(u)$ then $W(G_2)\geq W(G_1)$ and equality happens if and only if $D_G(v) = D_G(u).$
\end{corollary}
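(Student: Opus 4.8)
The plan is to reduce the whole statement to a single, clean application of Lemma~\ref{count}. First I would name the vertex that results from the identification in $G_1$: call it $x$, which equals $u$ when viewed inside the copy of $G$ and equals $w$ when viewed inside the copy of $H$. By construction $x$ is a cut vertex of $G_1$ separating the copy of $G$ from the copy of $H$, and $G_1 = G \cup H$ with $V(G) \cap V(H) = \{x\}$. Lemma~\ref{count} then gives
$$W(G_1) = W(G) + W(H) + (|V(G)|-1)\,D_H(w) + (|V(H)|-1)\,D_G(u).$$
The key point making this legitimate is that the distance functions $D_H(\cdot)$ and $D_G(\cdot)$ on the right are computed inside the respective \emph{subgraphs}, so their values are unaffected by which vertex of $G$ is chosen for the gluing.

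Next I would run the identical decomposition on $G_2$: here the cut vertex is the point identified from $v \in V(G)$ and $w \in V(H)$, the two pieces are again (isomorphic copies of) $G$ and $H$, and Lemma~\ref{count} yields
$$W(G_2) = W(G) + W(H) + (|V(G)|-1)\,D_H(w) + (|V(H)|-1)\,D_G(v).$$
Subtracting the two expressions, everything cancels except the last term, so that
$$W(G_2) - W(G_1) = (|V(H)|-1)\bigl(D_G(v) - D_G(u)\bigr).$$

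Finally, since $H$ has at least two vertices, $|V(H)| - 1 \geq 1 > 0$, so $W(G_2) - W(G_1)$ has exactly the same sign as $D_G(v) - D_G(u)$; in particular $D_G(v) \geq D_G(u)$ forces $W(G_2) \geq W(G_1)$, and equality holds if and only if $D_G(v) = D_G(u)$. There is no substantive obstacle in this argument — the only thing requiring care is the bookkeeping, namely verifying that both invocations of Lemma~\ref{count} split the graph into the \emph{same} two pieces $G$ and $H$, and noting that the hypothesis that both $G$ and $H$ have at least two vertices is precisely what guarantees the coefficient $|V(H)| - 1$ is strictly positive (so that the ``equality if and only if'' conclusion is exact) and that the identifications genuinely produce a cut vertex.
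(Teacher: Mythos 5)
Your proof is correct and follows essentially the same route as the paper: both apply Lemma~\ref{count} to the cut vertex created by each identification, obtain the two expressions for $W(G_1)$ and $W(G_2)$, and subtract to get $W(G_2)-W(G_1)=(|V(H)|-1)(D_G(v)-D_G(u))$. Your added remarks about the identified vertex being a cut vertex and the coefficient $|V(H)|-1$ being strictly positive are exactly the (implicit) justifications the paper relies on.
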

\begin{proof}
By Lemma \ref{count}, 
$$W(G_1)=W(G)+W(H)+(|V(G)|-1)D_H(w)+(|V(H)|-1)D_G(u)$$ and 
$$W(G_2)=W(G)+W(H)+(|V(G)|-1)D_H(w)+(|V(H)|-1)D_G(v).$$ So 
$$W(G_2)-W(G_1)=(|V(H)|-1)(D_G(v)-D_G(u))$$ and the result follows.
\end{proof}

Let  $G$ be a connected graph on $n\geq 2$ vertices. Let $v$ be a vertex of $G.$ For $l,k \geq 1,$  let $G_{k,l}$ be the
graph obtained from $G$ by attaching two new paths $P:vv_{1}v_{2}\cdots v_{k}$ and $Q:vu_{1}u_{2}\cdots u_{l}$ of
lengths $k$ and $l$ respectively, at $v$, where $u_{1},u_{2},\ldots,u_{l}$ and $v_{1},v_{2},\ldots,v_{k}$ are distinct new vertices. Let ${\widetilde G}_{k,l}$ be the graph obtained from $G_{k,l}$ by removing the edge $\{v_{k-1},v_{k}\}$ and adding the edge $\{u_{l},v_{k}\} $. Observe that the graph ${\widetilde G}_{k,l}$ is isomorphic to the graph
$G_{k-1,l+1}$. We say that ${\widetilde G}_{k,l}$ is obtained from $G_{k,l}$ by {\em grafting} an edge. 

Consider the path $P_n:v_1 v_2\ldots v_n$ on $n$ vertices with $v_i$ adjacent to $v_{i-1}$ and $v_{i+1}$ for $2\leq i\leq n-1.$ Then for $i=1,2,\ldots,n$, $$D_{P_n}(v_i)=D_{P_n}(v_{n-i+1})=\dfrac{(n-i)(n-i+1)+i(i-1)}{2}.$$ So, if $n$ is odd, then
$$D_{P_n}(v_1)>D_{P_n}(v_2)>\cdots>D_{P_n}(v_{\frac{n+1}{2}})<D_{P_n}(v_{\frac{n+3}{2}})<\cdots <D_{P_n}(v_{n-1})<D_{P_n}(v_n)$$
and if $n$ is even, then 
$$D_{P_n}(v_1)>D_{P_n}(v_2)>\cdots>D_{P_n}(v_{\frac{n}{2}})=D_{P_n}(v_{\frac{n+2}{2}})<\cdots<D_{P_n}(v_{n-1})<D_{P_n}(v_n).$$
The next result follows from the above and Corollary \ref{new1}.
\begin{corollary}\label{effect-2}(\cite{Lp},Lemma 2.4)
If $1\leq k \leq l,$ then $W(G_{k-1,l+1})>W(G_{k,l}).$
\end{corollary}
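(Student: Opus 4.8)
The plan is to apply Corollary \ref{new1} directly, recognizing the two grafted graphs as identifications of a fixed graph with a path at two different vertices. Concretely, let $H$ be the path $P_{k+l+1}$ with consecutive vertices labelled $w_0 w_1 \cdots w_{k+l}$, and pick the "attachment vertex" to be $w_k$. Then identifying $w_k$ of $H$ with the vertex $v$ of $G$ produces exactly $G_{k,l}$ (the sub-path $w_k w_{k-1}\cdots w_0$ plays the role of $P:vv_1\cdots v_k$ and the sub-path $w_k w_{k+1}\cdots w_{k+l}$ plays the role of $Q:vu_1\cdots u_l$). Likewise, identifying $w_{k-1}$ of $H$ with $v$ of $G$ produces a graph isomorphic to $G_{k-1,l+1}$, since from $w_{k-1}$ the two arms of the path have lengths $k-1$ and $l+1$. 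So both graphs in the statement are of the form considered in Corollary \ref{new1}, with the roles "$G$" $=H=P_{k+l+1}$, "$H$" $=G$, and $w$ the vertex of $G$ being glued in; here $G_1$ corresponds to gluing at $w_{k-1}$ and $G_2$ to gluing at $w_k$.

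Next I would invoke the distance inequality for paths recorded just above the statement. Writing $n' = k+l+1$ for the order of $H=P_{n'}$, the vertex $w_j$ is the $(j+1)$-st vertex of the path, and the displayed monotonicity of $D_{P_{n'}}(\cdot)$ says the distance function strictly decreases from either end toward the middle. Since $1\le k\le l$, the index $k$ satisfies $k \le \lfloor n'/2\rfloor$ (indeed $2k \le k+l < k+l+1 = n'$), so both $w_{k-1}$ and $w_k$ lie in the non-increasing initial stretch $w_0, w_1, \dots$, and $w_{k-1}$ is strictly closer to the end than $w_k$; hence $D_{P_{n'}}(w_{k-1}) > D_{P_{n'}}(w_k)$. (The one point needing a moment's care is that the two glue points are genuinely distinct and both fall strictly before the midpoint so that the inequality is strict rather than an equality — this is exactly where the hypothesis $k\le l$, and not merely $k\le l+1$, is used.)

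Now apply Corollary \ref{new1} with the vertex "$v$" of its statement taken to be $w_{k-1}$ and the vertex "$u$" taken to be $w_k$: since $D_H(w_{k-1}) > D_H(w_k)$, we get $W(G_2) < W(G_1)$ in the reversed sense, i.e. the graph obtained by gluing at $w_{k-1}$ has strictly larger Wiener index than the one obtained by gluing at $w_k$. Translating back through the isomorphisms of the first paragraph, this says precisely $W(G_{k-1,l+1}) > W(G_{k,l})$, which is the claim. The only genuine obstacle is bookkeeping: making sure the index shift between "the $j$-th vertex of $P_{n'}$" and "the vertex at distance $j$ from an endpoint" is handled consistently, and confirming strictness of the path-distance inequality under $1\le k\le l$; once those are pinned down, the result is immediate from the two cited corollaries.
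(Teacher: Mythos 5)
Your proof is correct and takes essentially the same route the paper intends: view $G_{k,l}$ and $G_{k-1,l+1}$ as identifications of $G$ with the path $P_{k+l+1}$ at two adjacent vertices and apply Corollary \ref{new1} together with the displayed monotonicity of the path distances. One small point to tighten: when $n'=k+l+1$ is even, the bound $k\le\lfloor n'/2\rfloor$ alone still allows the equality $D_{P_{n'}}(v_{n'/2})=D_{P_{n'}}(v_{n'/2+1})$; strictness holds because $n'$ even forces $k\ne l$, hence $k\le l-1$ and the glue points at positions $k$ and $k+1$ both lie in the strictly decreasing stretch.
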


The following result compares the Wiener index of two graphs, where one is obtained from the other by moving one component from a vertex to another vertex.
\begin{lemma}(\cite{Ll},Lemma 2.4)\label{effect-0}
Let $ H,X,Y $ be three connected pairwise vertex disjoint graphs having at least $2$ vertices each. Suppose that $u$ and $v$ are two distinct vertices of $H$, $x$ is a vertex of $X$ and $y$ is a vertex of $Y.$ Let G be the graph obtained from $H, X, Y $ by identifying $u$ with $x$ and $v$ with $y$, respectively. Let $G_1^*$  be the graph obtained from $H, X, Y$ by identifying vertices $u,x,y$ and let $G _2^* $ be the graph obtained from $H, X, Y$ by identifying vertices $v,x,y$ (see figure \ref{fig}). Then $W(G_1^*)< W(G)$ or W $(G_2^*) < W (G)$.
\end{lemma}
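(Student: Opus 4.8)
The plan is to apply Lemma~\ref{count} several times to obtain closed forms for $W(G)$, $W(G_1^*)$ and $W(G_2^*)$, and then to show that the differences $W(G_1^*)-W(G)$ and $W(G_2^*)-W(G)$ cannot both be nonnegative.

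Write $h=|V(H)|$, $a=|V(X)|$, $b=|V(Y)|$; by hypothesis $a,b\ge 2$, and since $u\ne v$ and $H$ is connected we have $d_H(u,v)\ge 1$. I would first record the elementary identity that if a connected graph $A$ is enlarged to $A'$ by identifying a vertex $z$ of a connected graph $Z$ with a vertex $w$ of $A$, then $D_{A'}(w')=D_A(w')+(|V(Z)|-1)\,d_A(w',w)+D_Z(z)$ for every $w'\in V(A)$. Combining this with Lemma~\ref{count}---attaching $Y$ at $v$ and then $X$ at $u$ to build $G$, and attaching both $X$ and $Y$ at the same vertex to build $G_1^*$ and $G_2^*$---a routine computation gives
\begin{align*}
W(G)&=C+(a-1)D_H(u)+(b-1)D_H(v)+(a-1)(b-1)\,d_H(u,v),\\
W(G_1^*)&=C+(a+b-2)D_H(u),\qquad W(G_2^*)=C+(a+b-2)D_H(v),
\end{align*}
where $C=W(H)+W(X)+W(Y)+(h+b-2)D_X(x)+(h+a-2)D_Y(y)$ is common to all three. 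Subtracting, $C$ cancels and one obtains
$$W(G_1^*)-W(G)=(b-1)\bigl[D_H(u)-D_H(v)-(a-1)\,d_H(u,v)\bigr],$$
$$W(G_2^*)-W(G)=(a-1)\bigl[D_H(v)-D_H(u)-(b-1)\,d_H(u,v)\bigr].$$

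To conclude I would argue by contradiction: if both differences were nonnegative, then since $a-1\ge 1$ and $b-1\ge 1$ both bracketed quantities would be nonnegative, and adding the resulting inequalities $D_H(u)-D_H(v)\ge(a-1)d_H(u,v)$ and $D_H(v)-D_H(u)\ge(b-1)d_H(u,v)$ yields $0\ge(a+b-2)\,d_H(u,v)$, contradicting $a+b-2\ge 2$ and $d_H(u,v)\ge 1$. Hence at least one bracket is strictly negative, so $W(G_1^*)<W(G)$ or $W(G_2^*)<W(G)$.

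I expect the only real obstacle to be the bookkeeping in the first step: one must be careful about which vertex is the cut vertex at each invocation of Lemma~\ref{count} and apply the auxiliary distance identity to the correct vertex. Once the three formulas are in hand, the remainder is a one-line additivity argument.
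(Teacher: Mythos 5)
Your proposal is correct: I checked the two applications of Lemma~\ref{count} (attach $Y$ at $v$, then $X$ at $u$, using your auxiliary identity $D_{H_1}(u)=D_H(u)+(b-1)d_H(u,v)+D_Y(y)$), and the three formulas, the common constant $C$, and the two differences $W(G_1^*)-W(G)=(b-1)[D_H(u)-D_H(v)-(a-1)d_H(u,v)]$ and $W(G_2^*)-W(G)=(a-1)[D_H(v)-D_H(u)-(b-1)d_H(u,v)]$ all come out as you state; adding the two bracketed inequalities under the assumption that both differences are nonnegative indeed forces $0\geq(a+b-2)d_H(u,v)>0$, a contradiction, since $a,b\geq 2$ and $u\neq v$ in the connected graph $H$. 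Note that the paper itself gives no proof of this lemma --- it is quoted from the reference [Ll, Lemma 2.4] --- so your argument is not competing with an internal proof; it is a clean self-contained derivation in the same spirit as the paper's other uses of Lemma~\ref{count}, and in fact yields the slightly stronger quantitative statement that a weighted sum of the two differences equals $-(a-1)(b-1)(a+b-2)d_H(u,v)<0$.
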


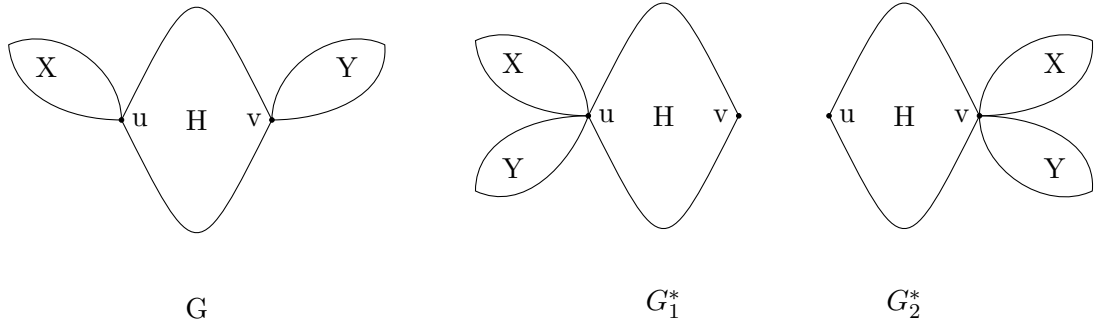
\begin{figure}[h]
\begin{center}
\begin{tikzpicture}
\filldraw (1,0)node[right]{u} circle [radius=.3mm];
\filldraw (3,0)node[left]{v} circle [radius=.3mm] (2,0)node{H} (0,.7)node{X} (4,.7)node{Y} (2,-2.5)node{G};
\draw (1,0)..controls(2,2)..(3,0)..controls(2,-2)..(1,0);
\draw (1,0) to[out=90,in=25] (-.5,1) to[out=275,in=180] (1,0);
\draw (3,0) to[out=90,in=155] (4.5,1) to[out=275,in=0] (3,0);
\end{tikzpicture}
\hskip 1cm
\begin{tikzpicture}
\filldraw (1,0)node[right]{u} circle [radius=.3mm];
\filldraw (3,0)node[left]{v} circle [radius=.3mm] (2,0)node{H} (0,.7)node{X} (0,-.7)node{Y}(2,-2.5)node{$G_1^*$};
\draw (1,0)..controls(2,2)..(3,0)..controls(2,-2)..(1,0);
\draw (1,0) to[out=90,in=25] (-.5,1) to[out=275,in=180] (1,0);
\draw (1,0) to[out=180,in=90] (-.5,-1) to[out=-25,in=250] (1,0);
\end{tikzpicture}
\hskip 1cm
\begin{tikzpicture}
\filldraw (1,0)node[right]{u} circle [radius=.3mm];
\filldraw (3,0)node[left]{v} circle [radius=.3mm] (2,0)node{H}  (4,.7)node{X} (4,-.7)node{Y} (2,-2.5)node{$G_2^*$};
\draw (1,0)..controls(2,2)..(3,0)..controls(2,-2)..(1,0);
\draw (3,0) to[out=-90,in=-155] (4.5,-1) to[out=-275,in=0] (3,0);
\draw (3,0) to[out=90,in=155] (4.5,1) to[out=275,in=0] (3,0);
\end{tikzpicture}
\end{center}
\caption{Movement of a component from one vertex to other}\label{fig}
\end{figure}

\begin{corollary}\label{effect-1}
Let $G$ be a connected graph on $n \geq 2 $ vertices and let $u,v \in V(G)$. For $n_1,n_2 \geq 0,$ let $G_{uv}(n_1,n_2)$ be the graph obtained from $G$ by attaching  $n_1$ pendant vertices at $u$ and $n_2$ pendant vertices at $v$. If $n_1,n_2 \geq 1$ then
$$W(G_{uv}(n_1+n_2,0))< W(G_{uv}(n_1,n_2))\;\;\mbox{or} \;\; W(G_{uv}(0,n_1+n_2))< W(G_{uv}(n_1,n_2)).$$
\end{corollary}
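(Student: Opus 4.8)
The plan is to recognize $G_{uv}(n_1,n_2)$ as an instance of the graph $G$ appearing in Lemma \ref{effect-0} and then invoke that lemma verbatim. First I would dispose of the degenerate possibility $u=v$: in that case the three graphs $G_{uv}(n_1+n_2,0)$, $G_{uv}(n_1,n_2)$, $G_{uv}(0,n_1+n_2)$ all coincide, so the statement is understood for distinct $u,v$, and we may assume $u\neq v$.

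Next, apply Lemma \ref{effect-0} with $H=G$, with $X$ a star $K_{1,n_1}$ having center $x$, and with $Y$ a star $K_{1,n_2}$ having center $y$, the three graphs chosen pairwise vertex disjoint. Since $n_1,n_2\geq 1$ we have $|V(X)|=n_1+1\geq 2$ and $|V(Y)|=n_2+1\geq 2$, and $|V(H)|=n\geq 2$, so all hypotheses of Lemma \ref{effect-0} hold. Identifying $u$ with $x$ and $v$ with $y$ yields exactly $G_{uv}(n_1,n_2)$, because attaching the star $K_{1,n_1}$ at its center to the vertex $u$ is the same operation as attaching $n_1$ pendant vertices at $u$, and likewise at $v$. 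The graph $G_1^*$ of Lemma \ref{effect-0}, obtained by identifying the vertices $u,x,y$, then places both stars at $u$ and hence equals $G_{uv}(n_1+n_2,0)$; the graph $G_2^*$, obtained by identifying $v,x,y$, equals $G_{uv}(0,n_1+n_2)$.

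With these identifications in place, the conclusion of Lemma \ref{effect-0}, namely $W(G_1^*)<W(G)$ or $W(G_2^*)<W(G)$, is literally the asserted inequality. So the only thing to check is that the pendant-attachment operations in the statement of the corollary coincide with the vertex-identification operations in Lemma \ref{effect-0}, which is immediate from the definitions of the graphs involved. I do not expect any genuine obstacle: the corollary is simply the special case of Lemma \ref{effect-0} in which the two ``movable'' components $X$ and $Y$ are stars. (Alternatively one could argue directly with Lemma \ref{count}, using the cut vertex created once all pendant vertices are collected at a single vertex, but routing everything through Lemma \ref{effect-0} is the shortest path.)
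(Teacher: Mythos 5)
Your proof is correct and matches the paper's intent: the paper states this corollary without proof as an immediate consequence of Lemma \ref{effect-0}, and your specialization with $X=K_{1,n_1}$ and $Y=K_{1,n_2}$ attached at their centers is exactly the intended argument. The extra care about $u\neq v$ and the vertex-count hypotheses is fine and does not change the route.
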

In \cite{Yf}, Lemma 2.6, if we take $G_0=P_{n_0}$ and $u_0$ and $v_0$ as two distinct pendant vertices of $G_0$, then $G_0\cong G_1 \cong G_2.$ So, $W(G_0)=W(G_1)=W(G_2),$ hence the statement of the mentioned lemma is not true. In the following result, we have given a proof of the corrected version of it.
\begin{lemma}\label{effect-3}
Let $G$ be a connected graph on $n \geq 3$ vertices and $u,v \in V(G).$ For $l,k\geq 1,$ let $G_{uv}^p(l,k)$ be the graph obtained from $G$ by identifying a pendant vertex of the path $P_l$ with $u$ and  identifying a pendant vertex of the path $P_k$ with $v$.  Suppose $l,k\geq 2.$ If $G$ is not the $u$-$v$ path and $D_G(u)\geq D_G(v)$ then $$W(G_{uv}^p(l+k-1,1)) >W(G_{uv}^p(l,k)).$$
\end{lemma}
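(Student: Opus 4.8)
The plan is to reduce the statement to a single application of Corollary \ref{new1} by first absorbing the path $P_l$ attached at $u$ into the ambient graph. Let $\hat G$ be the graph obtained from $G$ by identifying a pendant vertex of $P_l$ with $u$, and let $p_1,\dots,p_{l-1}$ be the $l-1$ new vertices, so that $u=p_0,p_1,\dots,p_{l-1}$ is the attached path; put $w=p_{l-1}$, which is a pendant vertex of $\hat G$ distinct from $u$ since $l\ge 2$. Then $G_{uv}^p(l,k)$ is precisely the graph obtained from $\hat G$ by identifying a pendant vertex of $P_k$ with $v$, while $G_{uv}^p(l+k-1,1)$ is the graph obtained from $\hat G$ by identifying a pendant vertex of $P_k$ with $w$ (the appendage at $u$ then becomes a path on $l+k-1$ vertices). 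Since $k\ge 2$, Corollary \ref{new1} applies with $\hat G$ in the role of $G$ and $P_k$ in the role of $H$, and it tells us that it suffices to prove the strict inequality $D_{\hat G}(w) > D_{\hat G}(v)$.

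To compare $D_{\hat G}(w)$ and $D_{\hat G}(v)$ I would split each distance sum into the contribution of $V(G)$ and the contribution of $p_1,\dots,p_{l-1}$. Because the appendage meets $G$ only at $u$, for $x\in V(G)$ we have $d_{\hat G}(w,x)=(l-1)+d_G(u,x)$ and $d_{\hat G}(v,x)=d_G(v,x)$, while $d_{\hat G}(w,p_i)=l-1-i$ and $d_{\hat G}(v,p_i)=d_G(u,v)+i$ for $1\le i\le l-1$. Summing these routine expressions gives $D_{\hat G}(w)=n(l-1)+D_G(u)+\binom{l-1}{2}$ and $D_{\hat G}(v)=D_G(v)+(l-1)d_G(u,v)+\binom{l}{2}$, and using $\binom{l}{2}=\binom{l-1}{2}+(l-1)$ this simplifies to
\[
D_{\hat G}(w)-D_{\hat G}(v)=(l-1)\bigl(n-d_G(u,v)-1\bigr)+\bigl(D_G(u)-D_G(v)\bigr).
\]

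It then remains to see that the right-hand side is strictly positive. The hypothesis $D_G(u)\ge D_G(v)$ makes the second summand nonnegative; for the first, $l-1\ge 1$ since $l\ge 2$, and $n-d_G(u,v)-1\ge 1$ because in a connected graph on $n$ vertices one has $d_G(u,v)=n-1$ only when $G$ is the $u$-$v$ path, which is exactly the case excluded by hypothesis, so $d_G(u,v)\le n-2$. Hence the difference is at least $l-1\ge 1>0$, as required. The only step that is more than bookkeeping is this last one: the hypothesis "$G$ is not the $u$-$v$ path" is used precisely to force $d_G(u,v)\le n-2$, and the hypothesis $l\ge 2$ is what makes the factor $l-1$ nonzero, thereby upgrading the conclusion to a \emph{strict} inequality even in the borderline case $D_G(u)=D_G(v)$; the role of $k\ge 2$ is merely to guarantee that Corollary \ref{new1} is applicable to $P_k$.
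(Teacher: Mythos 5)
Your proof is correct and is essentially the paper's own argument: the paper also forms the intermediate graph $H=G_{uv}^p(l,1)$ (your $\hat G$), applies Lemma \ref{count} (of which Corollary \ref{new1} is just a repackaging, valid here since $k\geq 2$), and reduces to the same identity $D_{\hat G}(w)-D_{\hat G}(v)=(l-1)\bigl(n-d_G(u,v)-1\bigr)+D_G(u)-D_G(v)$, concluded positively in the same way from $l\geq 2$, $d_G(u,v)\leq n-2$, and $D_G(u)\geq D_G(v)$.
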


\begin{proof}
First consider the graph $G_{u,v}^p(l,1)$ as $H$ and let $w$ be the pendant vertex of $H$ corresponding to $P_l.$ Then by Lemma \ref{count},

$$W(G_{u,v}^p(l,k))=W(H)+W(P_k)+(|V(H)|-1)D_{P_k}(v)+(k-1)D_H(v)$$
and 
$$W(G_{u,v}^p(l+k-1,1))=W(H)+W(P_k)+(|V(H)|-1)D_{P_k}(w)+(k-1)D_H(w).$$

As $D_{P_k}(v)=D_{P_k}(w)$ we get,

$$W(G_{u,v}^p(l+k-1,1))-W(G_{u,v}^p(l,k))=(k-1)(D_H(w)-D_H(v)).$$ 
Now $$D_H(w)=D_{P_{l-1}}(w)+(l-1)|V(G)|+D_G(u)$$ and $$D_H(v)=D_G(u)+(l-1)(d_G(u,v)+1)+D_{P_{l-1}}(u')$$ where $u'$ is the vertex on the path $P_l$ adjacent to $u.$  Since $D_{P_{l-1}}(w)=D_{P_{l-1}}(u')$, so $$D_H(w)-D_H(v)=(l-1)(|V(G)|-d_G(u,v)-1)+D_G(u)-D_G(v).$$
As $l\geq 2$ and $G$ is not the $u$-$v$ path, so $(l-1)(|V(G)|-d_G(u,v)-1)>0$. Hence the result follows from the given condition $D_G(u)\geq D_G(v).$
\end{proof}

The Wiener index of $U_{n,g}^p$ and $U_{n,g}^l$ are useful for our results and can be found in \cite{Yf}( see Theorem 1.1).
\begin{equation}\label{eq1}
W(U_{n,g}^p)=\begin{cases} \frac{g^3}{8}+(n-g)(\frac{g^2}{4}+n-1)  &\textit{ if g is even} \\ \frac{g(g^2-1)}{8}+(n-g)(\frac{g^2-1}{4}+n-1)  & \textit{if g is odd}\end{cases}
\end{equation}

\begin{equation}\label{eq2}
W(U_{n,g}^l)=\begin{cases} \frac{g^3}{8}+(n-g)(\frac{n^2+ng+3g-1}{6}-\frac{g^2}{12})  &\mbox{ if g is even} \\ \frac{g(g^2-1)}{8}+(n-g)(\frac{n^2+ng+3g-1}{6}-\frac{g^2}{12}-\frac{1}{4})  & \mbox{if g is odd}\end{cases}
\end{equation}

We next calculate the Wiener index of some more trees, which we need for the extremal bounds in some of our results. Let $S_{d,k}$ be the tree obtained by identifying a pendant vertex  of the path $P_d$ with the central vertex of the star $K_{1,k}$. By using Lemma \ref{count}, it is easy to see that 
\begin{equation}\label{eq3}
W(S_{d,k})= {d+1 \choose 3}+k^2+(d-1)k+\frac{d(d-1)k}{2}.
\end{equation}

 Then by Lemma \ref{count} and using the value of $W(S_{d,k})$ and $W(K_{1,l})$, we get
\begin{equation}\label{eq4}
W(T(l,k,d))={d+1 \choose 3}+l^2+k^2+\frac{(d^2+d-2)(k+l)}{2}+(d+1)kl.
\end{equation}

For $l \geq 2$ and $q \geq 1$, let $T_l^q$ be the tree on $lq+1$ vertices with $l$ pendant vertices having one vertex $v$ of degree $l$ and $T_l^q-v=lP_q$ ($l$ copies of $P_q$). Note that $T_1^q$ is the path $P_{q+1}.$ Then 
\begin{equation}\label{eq5}
 D_{T_l^q}(v)=l+2l+\cdots+ql=\frac{lq(q+1)}{2}.
\end{equation}
 Now by lemma \ref{count},
\begin{align*}
W(T_l^q)&=W(T_{l-1}^q)+W(T_1^q)+(l-1)qD_{T_1^q}(v)+qD_{T_{l-1}^q}(v)\\
&=W(T_{l-1}^q)+{q+2 \choose 3}+(l-1)q^2(q+1).
\end{align*}
Solving this recurrence relation we get,

\begin{equation}\label{eq6}
W(T_l^q)=l{q+2 \choose 3}+\frac{q^2l(q+1)(l-1)}{2}.
\end{equation}

\section{Proofs of Theorem \ref{max-thm0},Theorem \ref{min-thm0} and Theorem \ref{min-thm1}}

We first recall three known results related to Wiener index of graphs.

\begin{theorem}\label{pmax-thm1}(\cite{Shi},Theorem 4)
For $2\leq k\leq n-2,$  the tree $T(\lfloor\frac{k}{2}\rfloor,\lceil\frac{k}{2}\rceil,n-k)$ maximizes the Wiener index over $\mathfrak{T}_{n,k}.$
\end{theorem}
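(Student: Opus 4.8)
The plan is to prove the statement in two phases. In the first phase I would show that a tree maximizing $W$ over $\mathfrak{T}_{n,k}$ must be a \emph{double broom}, i.e.\ equal to $T(a,b,n-k)$ for some positive integers $a,b$ with $a+b=k$; in the second phase I would optimize over the split $(a,b)$ using the closed formula \eqref{eq4}.

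I would record the second phase first, since it both pins down the target and is the routine part. Put $d=n-k$, so $d\geq 2$ because $k\leq n-2$. Using $a+b=k$ one has $a^2+b^2+(d+1)ab=(a+b)^2+(d-1)ab=k^2+(d-1)ab$, so \eqref{eq4} collapses to
\[
W\big(T(a,b,d)\big)=\binom{d+1}{3}+\frac{(d^2+d-2)k}{2}+k^2+(d-1)\,ab,
\]
in which only the term $(d-1)ab$ depends on the split. Since $d-1\geq 1>0$, this expression is strictly increasing in the product $ab$, and over integers with $a+b=k$ and $a,b\geq 1$ the product $ab$ is maximized uniquely at the balanced split $a=\lfloor k/2\rfloor$, $b=\lceil k/2\rceil$. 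Thus $T(\lfloor k/2\rfloor,\lceil k/2\rceil,n-k)$ is the unique maximizer among double brooms with $k$ leaves.

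For the first phase I would argue that any tree in $\mathfrak{T}_{n,k}$ that is not a double broom can be modified into another tree in $\mathfrak{T}_{n,k}$ (same number $n$ of vertices and same number $k$ of pendant vertices) with strictly larger Wiener index, so that a maximizer is necessarily a double broom. Two kinds of leaf-preserving moves drive this. First, whenever a vertex carries two pendant paths, Corollary \ref{effect-2} lets me graft an end-edge of the shorter onto the longer with a strict increase of $W$; crucially, as long as the shorter path keeps length at least one this does not change the number of pendant vertices, so repeated grafting consolidates the pendant paths at a vertex into one long path together with single-edge pendants, still with exactly $k$ leaves. Second, Corollary \ref{new1} applied with $H=K_2$ shows that relocating a pendant edge from its current attachment point to a vertex of larger distance sum does not decrease $W$, and strictly increases it when the distance sums differ. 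Since, as in the monotonicity of $D_{P_n}(v_i)$ recorded above, the distance sum grows as an attachment point moves outward toward an extreme of the tree, each interior pendant vertex can be migrated to one of the two peripheral ends with a strict gain. Carrying out these straightening and migration steps (using Lemma \ref{effect-3} to merge residual long pendant paths hanging at distinct vertices) I would reduce to a caterpillar all of whose leaves hang at the two ends of its spine, namely $T(a,b,n-k)$ with $a+b=k$; the second phase then finishes the proof.

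The main obstacle is the bookkeeping in the first phase: every stretching move that increases $W$ naturally tends either to shorten a pendant path to nothing or to merge two leaves into one, both of which would drop the pendant count below $k$, while Lemma \ref{effect-3} itself merges paths at the cost of a leaf. The argument must therefore be organized so that each application of Corollary \ref{effect-2}, Corollary \ref{new1} and Lemma \ref{effect-3} is confined to the regime where the number of pendant vertices is exactly preserved, and so that interior branch vertices and interior long arms are eliminated in an order that never falls below $k$ leaves. Establishing that a maximizer has no interior branching—equivalently, that it is a caterpillar whose leaves sit only at the two spine-ends—is the crux; once that structural reduction is in hand, the explicit computation of the second phase is immediate.
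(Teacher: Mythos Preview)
The paper does not prove this theorem: it is quoted from \cite{Shi} and invoked as a black box in the proof of Theorem~\ref{max-thm0}(i). There is no in-paper argument to compare yours against.

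On your proposal itself: Phase~2 is correct. The identity $a^{2}+b^{2}+(d+1)ab=k^{2}+(d-1)ab$ with $d=n-k\geq 2$ reduces \eqref{eq4} to a strictly increasing function of the product $ab$, so the balanced split is the unique maximizer among double brooms.

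Phase~1, as you yourself concede, is only a plan, and the gap is substantive rather than mere bookkeeping. Corollary~\ref{effect-2} needs two pendant \emph{paths} at a common vertex and does not apply directly at a branch vertex carrying a non-path subtree; Lemma~\ref{effect-3} destroys a leaf by design; and relocating a pendant edge via Corollary~\ref{new1} from an interior attachment point $v_i$ to a spine endpoint $v_1$ requires $D_{G}(v_1)>D_{G}(v_i)$ in the remaining tree $G$, which is not automatic---leaves already clustered near $v_1$ depress $D_G(v_1)$ relative to $D_G(v_i)$. One can rescue the reduction by always sliding toward whichever endpoint currently has the larger transmission, or by a single ``slide along a longest path'' transformation, but either route needs an explicit strict-increase verification that your outline does not supply. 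As written, the proposal identifies the right target and the right tools but leaves the structural step (no interior branching in a maximizer) unproved.
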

\begin{theorem}\label{pmax-thm3}(\cite{Yf},Corollary 1.2) 
Among all unicyclic graphs on $n > 4$ vertices, the graph $U_{n,3}^l$ has the maximum Wiener index.
\end{theorem}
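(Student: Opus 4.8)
The plan is to start from an arbitrary unicyclic graph $G$ on $n>4$ vertices, whose unique cycle $C_g$ has girth $g\geq 3$, and to transform it into $U_{n,3}^l$ through a sequence of local operations, each of which does not decrease the Wiener index and is strict unless $G$ already equals $U_{n,3}^l$. A unicyclic graph is exactly the cycle $C_g$ with a (possibly trivial) rooted tree hanging from each of its vertices, so the argument splits naturally into first straightening these trees, then pushing them all to a single cycle vertex, and finally shrinking the cycle down to a triangle.

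First I would linearize the attached trees. At any vertex of $G$ that is a branch point of an attached tree and all of whose descending branches are paths, two of these paths play the role of $P$ and $Q$ in Corollary \ref{effect-2}; grafting merges them into one longer path and strictly increases $W$. Applying this repeatedly, choosing a deepest branch point each time, leaves each cycle vertex carrying at most a single pendant path and raises $W$ at every genuine grafting step. Next I would consolidate. If two distinct cycle vertices $u,v$ carry paths $P_l,P_k$ with $l,k\geq 2$, then taking the rest of the graph as the base graph $G$ of Lemma \ref{effect-3} (which is never a $u$--$v$ path, since it contains the cycle) and labelling so that $D_G(u)\geq D_G(v)$, Lemma \ref{effect-3} lets me replace the two paths by a single path $P_{l+k-1}$ at $u$, strictly increasing $W$. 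Each such step reduces by one the number of cycle vertices bearing a path, so after finitely many steps the graph is $C_g$ with one pendant path of order $n-g$, that is $U_{n,g}^l$. Thus $W(G)\leq W(U_{n,g}^l)$, with equality only if $G=U_{n,g}^l$.

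It remains to compare $U_{n,g}^l$ over the girths $3\leq g\leq n$ and to show that $g=3$ is strictly best; this is the step I expect to be the main obstacle, since none of the preceding lemmas changes the cycle length. Here I would use the closed form (\ref{eq2}): writing $W(U_{n,g}^l)$ as an explicit polynomial in $n$ and $g$ and examining the difference $W(U_{n,g}^l)-W(U_{n,g+2}^l)$ within a fixed parity class, together with the direct comparison of the smallest even and odd cases (for instance $g=3$ against $g=4$), shows that $W(U_{n,g}^l)$ is strictly decreasing in $g$ for $g\geq 3$; the hypothesis $n>4$ is exactly what keeps these differences positive in the extremal smallest cases. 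Alternatively, a cycle-shrinking transformation that deletes a cycle vertex far from the attachment point, re-closes the shorter cycle, and appends the removed vertex to the end of the pendant path realizes the passage $U_{n,g}^l\to U_{n,g-1}^l$, and one checks directly that it raises $W$.

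Either route gives $W(U_{n,g}^l)\leq W(U_{n,3}^l)$ with equality if and only if $g=3$. Chaining the three stages then yields $W(G)\leq W(U_{n,3}^l)$, and since each stage is strict unless the graph is already in the corresponding reduced form, equality forces $G$ to have a triangle, a single pendant path, and no further branching, i.e. $G=U_{n,3}^l$. The principal difficulty is thus concentrated in the girth-reduction step, where the monotonicity of $W(U_{n,g}^l)$ in $g$ must be established by a genuine (if routine) computation rather than by one of the transfer lemmas already available.
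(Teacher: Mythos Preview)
The paper does not supply its own proof of this statement: Theorem \ref{pmax-thm3} is one of the three ``known results'' quoted without argument at the start of Section 3, taken verbatim from Yu and Feng \cite{Yf}, and used only as a black box in the proofs of Theorem \ref{max-thm0}(ii) and Lemma \ref{3uni2}. There is therefore no in-paper proof to compare your proposal against.

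That said, your outline is a sound reconstruction and matches the structure of the original result in \cite{Yf}: the theorem there is obtained as a corollary of the fixed-girth case, so the two-stage scheme \emph{reduce to $U_{n,g}^l$ by local moves, then optimize over $g$} is exactly the intended route. The reductions via Corollary \ref{effect-2} and Lemma \ref{effect-3} are legitimate as you describe them (the base graph in Lemma \ref{effect-3} always contains the cycle, hence is never the $u$--$v$ path, and has $\geq 3$ vertices). The only point demanding genuine work, as you correctly isolate, is the girth comparison. For instance, from (\ref{eq2}) one computes $W(U_{n,3}^l)-W(U_{n,4}^l)=n-4$, which is positive precisely when $n>4$; this is why the hypothesis $n>4$ is sharp (at $n=4$ the triangle-plus-edge and $C_4$ both have Wiener index $8$). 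The remaining differences $W(U_{n,g}^l)-W(U_{n,g+1}^l)$ are handled by the same kind of direct calculation, separating the parity cases of $g$ as you suggest.
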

 \begin{theorem} (\cite{Jp},Theorem 5) \label{plesnic}
 Let $G$ be a two connected graph with $n$ vertices then $W(G)\leq W(C_n)$ and equality holds if and only if $G=C_n.$
 \end{theorem}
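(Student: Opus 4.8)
The plan is to bound the transmission $D_G(v)$ of each individual vertex and then sum, exploiting the identity $W(G)=\frac12\sum_{v}D_G(v)$. Fix a vertex $v$ of a $2$-connected graph $G$ on $n$ vertices and partition $V(G)$ into the distance layers $V_i=\{w: d_G(v,w)=i\}$ for $0\le i\le e$, where $e$ is the eccentricity of $v$; write $n_i=|V_i|$, so $n_0=1$ and $\sum_{i=0}^{e}n_i=n$. The structural heart of the argument is the claim that $n_i\ge 2$ for every intermediate layer $1\le i\le e-1$. Indeed, if some such layer consisted of a single vertex $c$, then since any edge of $G$ joins two vertices of the same or consecutive layers, $c$ would separate $V_0\cup\cdots\cup V_{i-1}$ (which contains $v$) from $V_{i+1}\cup\cdots\cup V_e$ (nonempty because $i<e$), making $c$ a cut vertex and contradicting $2$-connectivity.

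Granting this, I would maximize $D_G(v)=\sum_{i=1}^{e}i\,n_i$ subject to $\sum_{i=1}^{e}n_i=n-1$, $n_i\ge 2$ for $1\le i\le e-1$, and $n_e\ge 1$. For a fixed eccentricity $e$ this linear objective is maximized by pushing all the slack into the top layer, i.e.\ taking $n_i=2$ for $1\le i\le e-1$ and $n_e=n-1-2(e-1)$, which gives $D_G(v)\le e(n-e)$; the requirement $n_e\ge 1$ forces $e\le\lfloor n/2\rfloor$, and since $e(n-e)$ increases on $[0,n/2]$ the overall maximum is $\lfloor n^2/4\rfloor$. Comparing with \eqref{eq-c2}, this is exactly $D_{C_n}(v)$, so $D_G(v)\le D_{C_n}(v)$ for every $v$. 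Summing over all vertices and halving yields $W(G)\le\frac{n}{2}\lfloor n^2/4\rfloor=W(C_n)$ by \eqref{eq-c1}.

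For the equality case I would trace back through the optimization. If $W(G)=W(C_n)$, then necessarily $D_G(v)=\lfloor n^2/4\rfloor$ for every $v$, which forces both $e=\lfloor n/2\rfloor$ and, by the uniqueness of the linear maximizer (the coefficients $e-i$ being strictly positive for $i<e$), $n_i=2$ for all $1\le i\le e-1$. In particular the first layer satisfies $n_1=2$, that is $\deg_G(v)=2$, for every vertex $v$; the small cases (where $e$ is $1$ or $2$) I would verify directly against the forced layer sizes to confirm $n_1=2$ still holds. Hence $G$ is $2$-regular and connected, and therefore $G=C_n$.

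The main obstacle is the structural claim that every intermediate distance layer contains at least two vertices; once this cut-vertex/layer-separation argument is in place, the remainder is routine optimization and bookkeeping. The only additional point needing care is checking that the forced ``degree $2$ at every vertex'' conclusion survives the boundary cases where the eccentricity is too small for an intermediate layer to exist.
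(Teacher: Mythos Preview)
The paper does not give its own proof of this statement: Theorem~\ref{plesnic} is quoted from Ples\-n\'ik~\cite{Jp} (Theorem~5 there) and invoked as a known result, so there is nothing in the paper to compare your argument against.

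That said, your proof is correct and is essentially Ples\-n\'ik's original argument. The key structural step---that in a $2$-connected graph every intermediate distance layer $V_i$ ($1\le i\le e-1$) from a fixed vertex has $|V_i|\ge 2$, since a singleton layer would be a cut vertex---is exactly right, and the linear optimization yielding $D_G(v)\le e(n-e)\le\lfloor n^2/4\rfloor=D_{C_n}(v)$ is clean. Your handling of the equality case is also sound: for $n\ge 4$ equality forces the eccentricity $e=\lfloor n/2\rfloor\ge 2$, so layer~$1$ is intermediate and $n_1=2$, giving $\deg(v)=2$ for every $v$; for $n=3$ the only $2$-connected graph is $C_3$ itself. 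One small cosmetic point: your parenthetical ``coefficients $e-i$'' refers to the marginal gain of placing a surplus vertex in layer $e$ rather than layer $i$, which is indeed strictly positive for $i<e$ and gives uniqueness of the maximizer---you might phrase that more explicitly.
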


We now compare the Wiener index of the graphs $C_{3,3}^n$ and $C_n.$
 \begin{lemma}\label{3cycles}
 For $n \geq 6$, $W(C_n)\leq W(C_{3,3}^n)$ and equality happens if and only if $n=6.$
\end{lemma}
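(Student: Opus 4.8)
The plan is to pin down $W(C_{3,3}^n)$ exactly and then subtract the known value of $W(C_n)$ from \eqref{eq-c1}.

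First I would compute $W(C_{3,3}^n)$ for $n\geq 6$ via the cut-vertex formula, Lemma \ref{count}. Let $u$ be the vertex of $C_{3,3}^n$ that lies on the first triangle and is an endpoint of the connecting path, so $u$ is a cut vertex. Writing $C_{3,3}^n=G_1\cup G_2$ with $G_1$ the first triangle and $G_2$ the union of the connecting path and the second triangle, we have $V(G_1)\cap V(G_2)=\{u\}$, $|V(G_1)|=3$, $|V(G_2)|=n-2$, $W(G_1)=W(C_3)=3$ and $D_{G_1}(u)=2$. The key observation is that $G_2\cong U_{n-2,3}^{l}$, with $u$ playing the role of the pendant vertex of the path $P_{n-5}$ in $U_{n-2,3}^{l}$; a direct count along that path gives $D_{G_2}(u)=\sum_{i=0}^{n-5} i+2(n-4)=\frac{(n-4)(n-1)}{2}$, and $W(G_2)=W(U_{n-2,3}^{l})$ is read off from \eqref{eq2} with $g=3$, which simplifies to $3+\frac{n(n-1)(n-5)}{6}$. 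Substituting all of this into Lemma \ref{count} and simplifying should yield $W(C_{3,3}^n)=\frac{n^3-13n+24}{6}$ (this also matches Theorem \ref{max-thm0}(iii)).

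Second, I would compare. Using \eqref{eq-c1}: when $n$ is even, $W(C_{3,3}^n)-W(C_n)=\frac{n^3-13n+24}{6}-\frac{n^3}{8}=\frac{n^3-52n+96}{24}=\frac{(n-6)(n-2)(n+8)}{24}$, which is nonnegative for $n\geq 6$ and vanishes exactly at $n=6$; when $n$ is odd, $W(C_{3,3}^n)-W(C_n)=\frac{n^3-13n+24}{6}-\frac{n(n^2-1)}{8}=\frac{n^3-49n+96}{24}$, and since $n^3-49n+96$ equals $96$ at $n=7$ and has positive derivative $3n^2-49$ for $n\geq 7$, it is strictly positive for all odd $n\geq 7$. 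Combining the two parities gives $W(C_n)\leq W(C_{3,3}^n)$ with equality if and only if $n=6$.

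The only real work is the first step: correctly identifying $G_2$ with $U_{n-2,3}^{l}$ so as to reuse \eqref{eq2}, and then carrying out the (routine) algebra that collapses Lemma \ref{count}'s expression to $\frac{n^3-13n+24}{6}$. If one prefers a self-contained computation, $W(C_{3,3}^n)$ can instead be obtained by partitioning the sum of distances into the contributions within each triangle, between the two triangles, between a triangle and the connecting path, and within the connecting path; this avoids \eqref{eq2} at the cost of a longer (but entirely elementary) calculation. After $W(C_{3,3}^n)$ is in hand, everything reduces to the factoring in the second step, so I do not anticipate any genuine obstacle.
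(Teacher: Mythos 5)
Your proposal is correct and follows essentially the same route as the paper: decompose $C_{3,3}^n$ at the cut vertex into $C_3$ and $U_{n-2,3}^l$, apply Lemma \ref{count} together with \eqref{eq2} to get $W(C_{3,3}^n)=\frac{n^3-13n+24}{6}$, and then compare with \eqref{eq-c1} by parity. The only cosmetic difference is your factorization $\frac{(n-6)(n-2)(n+8)}{24}$ in the even case, which makes the sign analysis slightly more transparent than the paper's form $\frac{n(n^2-52)}{24}+4$.
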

\begin{proof} By (\ref{eq2}), we have
$W(U_{n,3}^l)=\frac{n^3-7n+12}{6}$. If $u$ is the pendant vertex of $U_{n,3}^l$ then $D_{U_{n,3}^l}(u)=D_{P_{n-2}}(u)+2(n-2)=\frac{(n-3)(n-2)}{2}+2n-4=\frac{n^2-n-2}{2}.$ For $n \geq 6$, let $u$ be the cut-vertex common to $C_3$ and $U_{n-2,3}^l$ of $C_{3,3}^n$. Then by Lemma \ref{count}, 
\begin{align}\label{eq7}
W(C_{3,3}^n)&=W(C_3)+W(U_{n-2,3}^l)+2D_{U_{n-2,3}^l}(u)+2(n-3) \nonumber\\
&=3+\frac{(n-2)^3-7(n-2)+12}{6}+(n-2)^2-(n-2)-2+2n-6 \nonumber\\
&=\frac{n^3-13n+24}{6}
\end{align}

By (\ref{eq-c1}) and (\ref{eq7}), we have 
$$W(C_{3,3}^n)-W(C_n)=\begin{cases} \frac{n(n^2-52)}{24}+4, & \mbox{if $n$ is even} \\ 
\frac{n(n^2-49)}{24}+4, & \mbox{if $n$ is odd.}\end{cases}$$

Hence the result follows.
\end{proof}

\begin{lemma}\label{c-cmn}
Let $m_1,m_2 \geq 3$ be two integers and let $n=m_1+m_2-1$. Then $W(C_n)>W(C_{m_1,m_2}^n).$
\end{lemma}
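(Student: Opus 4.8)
The plan is to use Lemma~\ref{count} to decompose $C_{m_1,m_2}^n$ at its cut vertices and obtain a closed formula for $W(C_{m_1,m_2}^n)$, then compare it with the formula~(\ref{eq-c1}) for $W(C_n)$. Since $n = m_1 + m_2 - 1$, the graph $C_{m_1,m_2}^n$ is obtained (in the ``degenerate'' case of the construction) by identifying one vertex of $C_{m_1}$ with one vertex of $C_{m_2}$; call this common vertex $u$. Then $u$ is a cut vertex, $C_{m_1,m_2}^n = C_{m_1}\cup C_{m_2}$ with $V(C_{m_1})\cap V(C_{m_2})=\{u\}$, and $|V(C_{m_i})| = m_i$. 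Applying Lemma~\ref{count},
\begin{equation*}
W(C_{m_1,m_2}^n) = W(C_{m_1}) + W(C_{m_2}) + (m_1-1)D_{C_{m_2}}(u) + (m_2-1)D_{C_{m_1}}(u),
\end{equation*}
and each term on the right is given explicitly by~(\ref{eq-c1}) and~(\ref{eq-c2}). This yields $W(C_{m_1,m_2}^n)$ as an explicit polynomial expression in $m_1$ and $m_2$, with the precise form depending on the parities of $m_1$ and $m_2$ (four cases).

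Next I would compute $W(C_n) = W(C_{m_1+m_2-1})$ from~(\ref{eq-c1}), again splitting into the cases $n$ even or odd; note $n = m_1+m_2-1$ is even exactly when $m_1+m_2$ is odd, i.e. when $m_1,m_2$ have opposite parity, so the parity cases align cleanly with those of the previous step. The goal then reduces to verifying the single inequality
\begin{equation*}
W(C_{m_1+m_2-1}) - W(C_{m_1,m_2}^n) > 0
\end{equation*}
in each parity case, as a polynomial inequality in $m_1,m_2 \geq 3$. I expect this difference to be a quadratic-type expression that is manifestly positive for $m_1,m_2\geq 3$; the intuition is that merging two cycles at a single vertex is much more ``compact'' than a single long cycle of nearly the same size, because in $C_n$ every vertex has eccentricity about $n/2$, whereas in $C_{m_1,m_2}^n$ distances are bounded roughly by $m_1/2 + m_2/2 \approx n/2$ only for the extreme pairs and are typically much smaller. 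A clean way to organize the bound: since $m_1, m_2 \geq 3$, one has $(m_1-1)(m_2-1)\geq 2\max(m_1,m_2)-2$ and similar elementary estimates, which should let me absorb the cross terms.

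The main obstacle will be bookkeeping the parities carefully: the floor/ceiling-free formulas~(\ref{eq-c1}) and~(\ref{eq-c2}) differ by lower-order terms between the even and odd cases, and there are four combinations of $(m_1 \bmod 2, m_2\bmod 2)$ to track, though by symmetry in $m_1,m_2$ only three are genuinely distinct. I would handle this by writing $m_1 = 2a$ or $2a+1$ and $m_2 = 2b$ or $2b+1$ and checking that in every case the leading behaviour of $W(C_n) - W(C_{m_1,m_2}^n)$ is a positive quadratic in $a,b$ (equivalently in $m_1,m_2$) with the lower-order corrections too small to spoil positivity once $m_1,m_2\geq 3$. A uniform shortcut worth trying first is to bound $W(C_n)$ below by the even-case formula $\frac{1}{8}(n^3-n)$-type expression (the odd formula is larger) and $W(C_{m_1,m_2}^n)$ above by its largest parity case, reducing the four cases to a single inequality; if that crude bound already gives strict inequality for all $m_1,m_2\geq 3$, the proof collapses to one computation, and only if it fails near the boundary $m_1=m_2=3$ would I need the case analysis.
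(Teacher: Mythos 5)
Your proposal follows essentially the same route as the paper: apply Lemma \ref{count} at the identified vertex, substitute the cycle formulas (\ref{eq-c1}) and (\ref{eq-c2}), and check positivity of $W(C_n)-W(C_{m_1,m_2}^n)$ through the parity case analysis. One caution: the uniform shortcut you suggest trying first (lower-bounding $W(C_n)$ by $\tfrac{1}{8}(n^3-n)$ and upper-bounding $W(C_{m_1,m_2}^n)$ by its both-even expression) does fail at the boundary, e.g.\ at $m_1=m_2=3$ the crude difference is $\tfrac{1}{8}\bigl((m_2-1)m_1^2+(m_1-1)m_2^2-6m_1m_2+2m_1+2m_2\bigr)=-\tfrac{3}{4}<0$, so the full case-by-case computation you describe as the fallback (and which the paper carries out) is genuinely needed.
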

\begin{proof}
Let $v$ be the vertex of degree $4$ in $C_{m_1,m_2}^n$. First suppose $n$ is even. Then one of $m_1$ or  $m_2$ is odd and other is even. Without loss of generality, suppose $m_1$ is odd and $m_2$ is even. Then by Lemma \ref{count}, (\ref{eq-c1}) and (\ref{eq-c2}) we have
\begin{align*}
W(C_{m_1,m_2}^n)&=W(C_{m_1})+W(C_{m_2})+(m_2-1)D_{C_{m_1}}(v)+(m_1-1)D_{C_{m_2}}(v)\\ &= \frac{m_1^3-m_1}{8}+\frac{m_2^3}{8}+(m_2-1)\frac{m_1^2-1}{4}+(m_1-1)\frac{m_2^2}{4}\\
&=\frac{1}{8}(m_1^3+m_2^3+2m_1^2m_2+2m_1m_2^2-2m_1^2-2m_2^2-m_1-2m_2+2)
\end{align*}
and 
\begin{align*}
W(C_n)&=\frac{1}{8}(m_1+m_2-1)^3\\
&=\frac{1}{8}(m_1^3+m_2^3+3m_1^2m_2+3m_1m_2^2-3m_1^2-3m_2^2-6m_1m_2+3m_1+3m_2-1)
\end{align*}
The difference is 

\begin{align*}
W(C_n)-W(C_{m_1,m_2}^n)&=\frac{1}{8}(m_1^2m_2+m_1m_2^2-m_1^2-m_2^2-6m_1m_2+4m_1+5m_2-3)\\
&=\frac{1}{8}\left((m_2-1)m_1^2+(m_1-1)m_2^2+4m_1+5m_2-6m_1m_2-3\right)
\end{align*}
 An easy calculation gives
\begin{equation*}
W(C_n)-W(C_{m_1,m_2}^n)\begin{cases} = \frac{1}{4}m_2(m_2-2), & \mbox{if $m_1=3$}\\
             \geq\frac{1}{8}(3(m_1-m_2)^2+4m_1+5m_2-3), & \mbox{if $m_1
						\geq  5$}\\
  \end{cases}
\end{equation*}
which is greater than $0.$\\

Now suppose $n$ is odd. Then there are two possibilities.\\
\textbf{Case 1:} Both $m_1$ and $m_2$ are even.
\begin{align*}
W(C_{m_1,m_2}^n)&=W(C_{m_1})+W(C_{m_2})+(m_2-1)D_{C_{m_1}}(v)+(m_1-1)D_{C_{m_2}}(v)\\
&=\frac{m_1^3}{8}+\frac{m_2^3}{8}+(m_2-1)\frac{m_1^2}{4}+(m_1-1)\frac{m_2^2}{4}\\
&=\frac{1}{8}(m_1^3+m_2^3+2m_2m_1^2+2m_1m_2^2-2m_1^2-2m_2^2)
\end{align*}
\begin{align*}
W(C_n)&=W(C_{m_1+m_2-1})\\
&=\frac{1}{8}\left((m_1+m_2-1)^3-(m_1+m_2-1)\right)\\
&=\frac{1}{8}(m_1^3+m_2^3+3m_1^2m_2+3m_1m_2^2-3m_1^2-3m_2^2-6m_1m_2+2m_1+2m_2)
\end{align*}
The difference is
\begin{align*}
W(C_n)-W(C_{m_1,m_2}^n)&=\frac{1}{8}\left((m_1-1)m_2^2+(m_2-1)m_1^2-6m_1m_2+2m_1+2m_2\right)\\
& \geq \frac{1}{8}\left( 3(m_1-m_2)^2+2m_1+2m_2 \right) \\
&>0
\end{align*}
\textbf{Case 2:} Both $m_1$ and $m_2$ are odd.
\begin{align*}
W(C_{m_1,m_2}^n)&=\frac{m_1^3-m_1}{8}+\frac{m_2^3-m_2}{8}+(m_2-1)\frac{m_1^2-1}{4}+(m_1-1)\frac{m_2^2-1}{4}\\
&=\frac{1}{8}(m_1^3+m_2^3+2m_2m_1^2+2m_1m_2^2-2m_1^2-2m_2^2-3m_1-3m_2+4)
\end{align*}
and the difference is

$$W(C_n)-W(C_{m_1,m_2}^n)=\frac{1}{8}\left((m_1-1)m_2^2+(m_2-1)m_1^2-6m_1m_2+5m_1+5m_2-4\right).$$
An easy calculation gives
\begin{align*}
W(C_n)-W(C_{m_1,m_2}^n) \begin{cases} >\frac{1}{8}\left( 3(m_1-m_2)^2+5m_1+5m_2-4 \right), & \mbox{if $m_1,m_2 \geq 5$} \\
=\frac{1}{8}(2m_2^2-4m_2+2), &\mbox{if $m_1=3$}\\
=\frac{1}{8}(2m_1^2-4m_1+2), &\mbox{if $m_2=3$} 
 \end{cases}
\end{align*}

which is greater than $0$ and this completes the proof.
\end{proof}

\begin{lemma}\label{24}
Let $u$ be the pendant vertex  and $v$ be a non-pendant vertex of the unicyclic graph $U_{n,g}^l$. Then $D_{U_{n,g}^l}(u)>D_{U_{n,g}^l}(v)$.
\end{lemma}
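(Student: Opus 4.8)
The plan is to compute $D_{U_{n,g}^l}(u)-D_{U_{n,g}^l}(v)$ explicitly for every non-pendant vertex $v$, using the transparent structure of $U_{n,g}^l$: it is the cycle $C_g$ with vertices $c_0,c_1,\dots,c_{g-1}$ (where $c_0$ is the cycle vertex joined to the path) together with a pendant path $c_0 w_1 w_2\cdots w_{n-g}$, so that $u=w_{n-g}$. Write $m=n-g\ (\ge 1)$ for the number of vertices outside the cycle and abbreviate $D=D_{U_{n,g}^l}$. I would split the non-pendant vertices into two groups: the vertices $c_0,w_1,\dots,w_{m-1}$ on the ``path side'', and the remaining cycle vertices $c_1,\dots,c_{g-1}$.

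For the first group I would use the standard edge-cut accounting. For $1\le i\le m-1$, deleting $\{w_i,w_{i+1}\}$ splits $U_{n,g}^l$ into a part of size $m-i$ (namely $w_{i+1},\dots,w_m$) and a part of size $g+i$, so $D(w_{i+1})-D(w_i)=(g+i)-(m-i)$; the same identity holds with $i=0$ upon writing $w_0:=c_0$. Summing these increments from $i$ up to $m-1$ telescopes to
$$D(u)-D(w_i)=(m-i)(g+i-1)\qquad(0\le i\le m-1).$$
Since $m-i\ge 1$ and $g+i-1\ge g-1\ge 2$, the right-hand side is strictly positive, which handles $c_0,w_1,\dots,w_{m-1}$. (Note that $D$ need not be monotone along the path, so one genuinely needs the telescoped sum rather than a step-by-step comparison.)

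For the second group, $c_0$ is a cut vertex separating the path from the cycle, so every geodesic from $w_m$ or from $c_j$ to a cycle vertex (resp. a path vertex) passes through $c_0$. Using $d(w_m,c_k)=m+d_{C_g}(c_0,c_k)$, $d(c_j,w_i)=d_{C_g}(c_0,c_j)+i$, $d(c_j,c_k)=d_{C_g}(c_j,c_k)$, and the fact from (\ref{eq-c2}) that $D_{C_g}(c_0)=D_{C_g}(c_j)$ for every cycle vertex, a direct computation gives
$$D(u)-D(c_j)=m\bigl(g-1-d_{C_g}(c_0,c_j)\bigr)\qquad(1\le j\le g-1).$$
Because any two distinct vertices of $C_g$ are at distance at most $\lfloor g/2\rfloor\le g-2$ for $g\ge 3$, the factor $g-1-d_{C_g}(c_0,c_j)$ is at least $1$, whence $D(u)-D(c_j)\ge m>0$. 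Combining the two cases proves the lemma.

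I do not expect a real obstacle here; the only point requiring care is that one cannot deduce $D(u)>D(c_j)$ from $D(u)>D(c_0)$, since in fact $D(c_0)<D(c_j)$ (moving off $c_0$ pushes all $m$ path vertices farther away). The cycle vertices therefore need the separate direct computation above, which is made painless by the vertex-transitivity of $C_g$.
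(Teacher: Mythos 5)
Your proposal is correct and follows essentially the same route as the paper: split the non-pendant vertices into cycle vertices and path-side vertices and compare distance sums through the cut vertex, using the vertex-transitivity of $C_g$; your formula $D(u)-D(c_j)=m\bigl(g-1-d_{C_g}(c_0,c_j)\bigr)$ is exactly the paper's $(n-g)\bigl(g-1-d(v,g)\bigr)$. The only (harmless) difference is on the path side, where you telescope the bridge identity $D(w_{i+1})-D(w_i)=(g+i)-(m-i)$ to get the exact value $(m-i)(g+i-1)$, while the paper instead compares $D_{P_{n-g+1}}(u)$ with $D_{P_{n-g+1}}(w)$ and adds the term $(g-1)\bigl(n-g-d(w,g)\bigr)$; both computations agree.
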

\begin{proof}
Let $g$ be the vertex of degree $3$ in $U_{n,g}^l$ and let $g+1$ be the  vertex adjacent to $g$ not on the $g$-cycle of $U_{n,g}^l$. Then
\begin{equation}\label{lol}
D_{U_{n,g}^l}(u)=D_{P_{n-g+1}}(u)+(g-1)(n-g)+D_{C_g}(g).
\end{equation}

If $v$ is a vertex on the cycle $C_g$ of $U_{n,g}^l$ then

$$D_{U_{n,g}^l}(v)=D_{C_g}(v)+d(v,g)(n-g)+D_{P_{n-g+1}}(g)$$

and if $w$ is a non pendant vertex  of $U_{n,g}^l$ which is not on the cycle then

$$D_{U_{n,g}^l}(w)=D_{P_{n-g+1}}(w)+d(w,g)(g-1)+D_{C_g}(g)$$

Since $D_{P_{n-g+1}}(u)=D_{P_{n-g+1}}(g)$,$D_{P_{n-g+1}}(u)>D_{P_{n-g+1}}(w)$ and $D_{C_g}(g)=D_{C_g}(v),$ so
\begin{align*}
D_{U_{n,g}^l}(u)-D_{U_{n,g}^l}(v)&=(n-g)(g-1-d(v,g))>0\\
D_{U_{n,g}^l}(u)-D_{U_{n,g}^l}(w)& > (g-1)(n-g-d(w,g))>0.
\end{align*}
\end{proof}
The next corollary follows from Lemma \ref{24} and Corollary \ref{new1}.
\begin{corollary}\label{3uni1}
Let $G$ be a connected graph with at least two vertices and let $u\in V(G).$ Suppose $v$ is the pendant vertex of $U_{n,g}^l$ and $w$ is a non-pendant vertex of $U_{n,g}^l$. Let $G_1$
and $G_2$ be the graph obtained from $G$ and $H$ by identifying $u$ of $G$ with the vertices with $v$ and $w$ of $U_{n,g}^l$, respectively. Then $W(G_1)>W(G_2).$ 
\end{corollary}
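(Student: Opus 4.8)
The plan is to deduce this directly from Corollary~\ref{new1}, with $U_{n,g}^l$ playing the role of the ``base'' graph and $G$ the role of the ``attached'' graph. Concretely, in the notation of Corollary~\ref{new1} I would let $U_{n,g}^l$ play the part of ``$G$'', with its pendant vertex $v$ and its non-pendant vertex $w$ (from the statement of Corollary~\ref{3uni1}) playing the parts of ``$v$'' and ``$u$'' respectively, and I would let the graph $G$ (with its vertex $u$) play the part of ``$H$'' (with vertex ``$w$''). Both graphs have at least two vertices, since $G$ does by hypothesis and $U_{n,g}^l$ has $n\geq g+1\geq 4$ vertices, so Corollary~\ref{new1} applies. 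Under this dictionary the graph obtained by identifying $u$ with $w$ is exactly $G_2$, and the graph obtained by identifying $u$ with $v$ is exactly $G_1$.

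The only hypothesis of Corollary~\ref{new1} left to verify is that the distance in $U_{n,g}^l$ of the vertex used in $G_1$ is at least that of the vertex used in $G_2$, i.e. $D_{U_{n,g}^l}(v)\geq D_{U_{n,g}^l}(w)$. This is precisely the content of Lemma~\ref{24}, which in fact furnishes the \emph{strict} inequality $D_{U_{n,g}^l}(v)>D_{U_{n,g}^l}(w)$. Corollary~\ref{new1} then gives $W(G_1)\geq W(G_2)$, with equality if and only if $D_{U_{n,g}^l}(v)=D_{U_{n,g}^l}(w)$; since the latter equality is excluded by Lemma~\ref{24}, we conclude $W(G_1)>W(G_2)$.

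There is essentially no obstacle here beyond bookkeeping. The one thing to be careful about is the clash of the symbols $u,v,w$ between Corollary~\ref{3uni1} and Corollary~\ref{new1}, so that the roles are assigned consistently; and one must make sure the strictness in Lemma~\ref{24} is invoked to discard the equality case in Corollary~\ref{new1}. (It is also worth noting that the identification construction behaves the same whether or not one first passes to vertex-disjoint copies, so the disjointness implicit in Corollary~\ref{new1} causes no difficulty.)
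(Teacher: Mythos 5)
Your proposal is correct and matches the paper's argument: the paper derives this corollary exactly by combining Lemma \ref{24} (which gives the strict inequality $D_{U_{n,g}^l}(v)>D_{U_{n,g}^l}(w)$) with Corollary \ref{new1}, just as you do. Your careful bookkeeping of the role reversal of symbols and of the strictness needed to rule out the equality case is exactly what the paper leaves implicit.
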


\begin{lemma}\label{3uni2}
Let $u$ be a vertex of a connected graph $G.$ For $m\geq 4,$ let $G_1$ be the graph obtained by identifying the vertex $u$ of $G$ with the pendant vertex of $U_{m+1,m}^l$  and $G_2$ be the graph obtained by identifying the vertex $u$ with the pendant vertex of $U_{m+1,3}^l$. Then $W(G_2)>W(G_1)$.
\end{lemma}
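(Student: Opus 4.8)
The plan is to split both $G_1$ and $G_2$ at the vertex $u$ and apply Lemma \ref{count}. Write $U_1=U_{m+1,m}^l$ and $U_2=U_{m+1,3}^l$, and let $p_1,p_2$ be their respective pendant vertices (these are the vertices identified with $u$ when forming $G_1$ and $G_2$). Each $U_i$ has $m+1$ vertices, so applying Lemma \ref{count} to the decomposition with $V(G)\cap V(U_i)=\{u\}$ gives, for $i=1,2$,
$$W(G_i)=W(G)+W(U_i)+(|V(G)|-1)\,D_{U_i}(p_i)+m\,D_G(u).$$
Subtracting, the terms $W(G)$ and $m\,D_G(u)$ cancel, so
$$W(G_2)-W(G_1)=\bigl(W(U_2)-W(U_1)\bigr)+(|V(G)|-1)\bigl(D_{U_2}(p_2)-D_{U_1}(p_1)\bigr).$$
Since $|V(G)|-1\geq 0$, it suffices to prove (a) $W(U_2)>W(U_1)$ and (b) $D_{U_2}(p_2)\geq D_{U_1}(p_1)$.

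For (a): both $U_1$ and $U_2$ are unicyclic graphs on $m+1$ vertices, and since $m\geq 4$ their cycles have different lengths ($m$ and $3$), so they are non-isomorphic. As $m+1>4$, Theorem \ref{pmax-thm3} says $U_2=U_{m+1,3}^l$ is the unique graph of maximum Wiener index among all unicyclic graphs on $m+1$ vertices; hence $W(U_2)>W(U_1)$.

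For (b): the pendant vertex $p_1$ of $U_1$ is adjacent to a vertex $a$ of the cycle $C_m$, so $D_{U_1}(p_1)=m+D_{C_m}(a)\leq m+\frac{m^2}{4}$ by (\ref{eq-c2}). For $U_2$, the distance of the pendant vertex computed in the proof of Lemma \ref{24} (equation (\ref{lol})) with $n=m+1$, $g=3$ gives $D_{U_2}(p_2)={m-1 \choose 2}+2(m-2)+2=\frac{(m-1)(m+2)}{2}$. A short computation shows that $\frac{(m-1)(m+2)}{2}>m+\frac{m^2}{4}$ is equivalent to $(m-1)^2>5$, which holds for every $m\geq 4$. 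Hence $D_{U_2}(p_2)>D_{U_1}(p_1)$, and combining with (a) yields $W(G_2)>W(G_1)$.

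Everything beyond Step 1 is routine; the one point requiring care is the bookkeeping in the decomposition --- in particular that $p_1,p_2$ are the pendant vertices of $U_1,U_2$ and that $D_{U_i}(p_i)$ is taken inside $U_i$. If one prefers not to invoke Theorem \ref{pmax-thm3}, part (a) can instead be obtained directly from the closed forms (\ref{eq1}) (for $U_1$, which has $n-g=1$) and (\ref{eq2}) (for $U_2$), separating the cases $m$ even and $m$ odd; in each case $W(U_2)-W(U_1)$ is a cubic polynomial in $m$ that is easily seen to be positive for $m\geq 4$. When $|V(G)|=1$ we have $G_i=U_i$ and the claim reduces to part (a).
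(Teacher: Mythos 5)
Your proposal is correct and follows essentially the same route as the paper: split each $G_i$ at $u$ via Lemma \ref{count}, use Theorem \ref{pmax-thm3} to get $W(U_{m+1,3}^l)>W(U_{m+1,m}^l)$, and compare the pendant-vertex distances via (\ref{lol}) and (\ref{eq-c2}), which yields the same quantity $\frac{m^2-2m-4}{4}>0$ the paper computes. Your extra remarks (the $|V(G)|=1$ degenerate case and the alternative computation of $W(U_2)-W(U_1)$ from (\ref{eq1})--(\ref{eq2})) are fine but not needed.
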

\begin{proof}
By Lemma \ref{count}, we have

$$W(G_1)=W(G)+W(U_{m+1,m}^l)+(|V(G)|-1)D_{U_{m+1,m}^l}(u)+mD_G(u)$$ and 
$$W(G_2)=W(G)+W(U_{m+1,3}^l)+(|V(G)|-1)D_{U_{m+1,3}^l}(u)+mD_G(u).$$
By Theorem \ref{pmax-thm3}, $W(U_{m+1,3}^l)>W(U_{m+1,m}^l)$. So, the difference is
$$W(G_2)-W(G_1)>(|V(G)|-1)(D_{U_{m+1,3}^l}(u)-D_{U_{m+1,m}^l}(u)).$$

By (\ref{lol}), we have $D_{U_{m+1,3}^l}(u)=\frac{(m-1)(m+2)}{2}$ and
\begin{equation*}
D_{U_{m+1,m}^l}(u)=\begin{cases}m+\frac{m^2}{4} &\mbox{if n is even} \\ 
m+\frac{m^2-1}{4} & \mbox{if n is odd.}\end{cases}
\end{equation*}
So,
\begin{equation*}
D_{U_{m+1,3}^l}(u)-D_{U_{m+1,m}^l}(u)=\begin{cases}\frac{m^2-2m-4}{4} &\mbox{if m is even}\\
\frac{m^2-2m-3}{4} &\mbox{if m is odd} 
 \end{cases}
\end{equation*}
which is greater than $0$ and this completes the proof.
\end{proof}

\begin{corollary}\label{cor1}
Let $m_1,m_2\geq 3$ be two integers and let $m_1+m_2\leq n.$ Then $W(C_{3,3}^n)\geq W(C_{m_1,m_2}^n)$ and equality happens if and only if $m_1=m_2=3.$
\end{corollary}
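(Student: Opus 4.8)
The plan is to reduce the cycles to triangles one at a time, using Lemma~\ref{3uni2} as the sole engine. Observe first that the hypothesis $m_1+m_2\le n$ forces $n>m_1+m_2-1$, so $C_{m_1,m_2}^n$ always contains a nontrivial connecting path; in particular the degenerate ``two cycles sharing a vertex'' configuration treated in Lemma~\ref{c-cmn} never arises here, and we may freely speak of the vertices of $C_{m_1}$ and of $C_{m_2}$ that lie on the connecting path.

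The core step I would isolate as a reduction claim: \emph{for all integers $p\ge 4$ and $q\ge 3$ with $p+q\le n$ one has $W(C_{3,q}^n)>W(C_{p,q}^n)$.} To prove it, let $a$ be the vertex of $C_p$ on the connecting path of $C_{p,q}^n$, and let $u$ be the neighbour of $a$ along that path (when $n=p+q$ the path is a single edge and $u$ is the vertex of $C_q$ on it). Then $u$ is a cut vertex; write $G_1$ for the subgraph spanned by $V(C_p)\cup\{u\}$ and $G_2$ for the subgraph spanned by the remaining vertices together with $u$, so that $C_{p,q}^n=G_1\cup G_2$ with $V(G_1)\cap V(G_2)=\{u\}$. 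A direct check of edges and of vertex counts ($|V(G_1)|=p+1$, $|V(G_2)|=n-p$) shows $G_1\cong U_{p+1,p}^l$ with $u$ as its pendant vertex, and $G_2\cong U_{n-p,q}^l$ with $u$ as its pendant vertex (or, in the case $n=p+q$, $G_2=C_q$ with $u$ a cycle vertex). Thus $C_{p,q}^n$ is obtained from $G_2$ by identifying $u$ with the pendant vertex of $U_{p+1,p}^l$, so Lemma~\ref{3uni2} applied with this $G_2$, this $u$, and $m=p\ge 4$ yields $W(H)>W(C_{p,q}^n)$, where $H$ is obtained from $G_2$ by identifying $u$ with the pendant vertex of $U_{p+1,3}^l$ instead. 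Finally one recognizes $H$: gluing at a vertex creates no new cycle, so $H$ consists of exactly the two cycles $C_3$ and $C_q$ joined by a path, and a vertex count gives that this path has $n-q-1$ vertices, i.e. $H\cong C_{3,q}^n$. This proves the reduction claim.

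Granting the claim, the corollary is immediate. If $m_1\ge 4$, then $W(C_{m_1,m_2}^n)<W(C_{3,m_2}^n)$ by the claim (with $p=m_1$, $q=m_2$). Since $C_{a,b}^n\cong C_{b,a}^n$ and $m_2+3\le m_1+m_2\le n$, applying the claim once more with $p=m_2$, $q=3$ gives $W(C_{3,m_2}^n)=W(C_{m_2,3}^n)<W(C_{3,3}^n)$ whenever $m_2\ge 4$. Combining the two (each used only when its hypothesis holds) we get $W(C_{m_1,m_2}^n)\le W(C_{3,3}^n)$, with strict inequality unless $m_1=m_2=3$; this is exactly the assertion, including the equality case.

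The only genuine work lies in the middle paragraph, namely the bookkeeping that identifies the two pieces $G_1,G_2$ of $C_{p,q}^n$ as the stated $U^l$-graphs with $u$ as pendant vertex, and that confirms $H\cong C_{3,q}^n$ after reattaching $U_{p+1,3}^l$ (the path length $n-q-1$ and the absence of extra cycles). I expect this structural identification, and keeping the degenerate subcase $n=m_1+m_2$ straight, to be the main obstacle; it is routine, and once it is settled Lemma~\ref{3uni2} does the rest.
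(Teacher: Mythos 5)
Your proof is correct, and it is exactly the route the paper intends: the corollary is stated without proof immediately after Lemma~\ref{3uni2}, to be obtained by cutting $C_{m_1,m_2}^n$ at the cut vertex next to each cycle and applying that lemma once per cycle (the hypothesis $m_1+m_2\le n$ ruling out the shared-vertex case), which is precisely your reduction claim. Your explicit bookkeeping of the pieces $U_{p+1,p}^l$ and $U_{n-p,q}^l$, including the degenerate case $n=p+q$, just fills in the details the paper leaves implicit.
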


\begin{proof}[{\bf Proof of Theorem \ref{max-thm0}:}]
\begin{enumerate}
\item[(i)] Let $G \in \mathfrak{H}_{n,k}.$ Construct a spanning tree $G'$ from $G$ by deleting some edges if required. Then by Lemma \ref{ed}, $W(G') \geq W(G)$. The number of pendent vertices of $G'$ is greater than or equal to $k$.  Suppose $G'$ has more than $k$ pendant vertices. Since $k\geq 2$, $G'$ has at least one vertex of degree greater than $2$
and two paths attached to it. Consider a vertex $v$ of $G'$ with $d(v)\geq 3$ and two paths $P_{l_1},P_{l_2},\; l_1\geq l_2$ attached at $v.$ Using grafting of edge operation on $G'$, we get a new tree $\tilde{G}$ with number of pendant vertices one less than the number of pendant vertices of $G'$ and by Corollary \ref{effect-2}, $W(\tilde{G})>W(G').$ Continue this process till we get a tree with $k$ pendant vertices from $\tilde{G}.$ By Lemma \ref{effect-2}, every step in this process the Wiener index will increase. So, we will reach at a tree of order $n$ with $k$ pendant vertices. Hence the result follows from Theorem \ref{pmax-thm1}. Then replacing $d, l$ and $k$ by $n-k,\lfloor \frac{k}{2} \rfloor$ and $\lceil \frac{k}{2} \rceil$, respectively in (\ref{eq4}), we get $W\left(T(\lfloor\frac{k}{2}\rfloor,\lceil \frac{k}{2}\rceil,n-k)\right)$.

\item[(ii)] Let $G\in \mathfrak{H}_{n,1}.$ Since $G$ is connected and has exactly one pendent vertex, it must contain a cycle. Let $C_g$ be a cycle in $G.$ If $G$ has more than one cycle, then construct a new graph $G'$ from $G$ by deleting edges from all cycles other than $C_g$ so that the graph remains connected. Then by Lemma \ref{ed}, $W(G')> W(G)$ and $G'$ is a unicyclic graph on $n$ vertices with girth $g.$ By Theorem \ref{pmax-thm3}, $W(U_{n,3}^l)\geq W(G')$ and equality happens if and only if $G' = U_{n,3}^l.$ As $U_{n,3}^l\in \mathfrak{H}_{n,1}$, so the result follows and  we get the value of $W(U_{n,3}^l)$ from (\ref{eq2}).

\item[(iii)]  Let $n\geq 7$ and let $G\in \mathfrak{H}_{n,0}$. Then we have two cases:\\

\textbf{Case 1:} For some integers $m_1,m_2 \geq 3$ with $n=m_1+m_2-1$ and $C_{m_1,m_2}^n$ is a subgraph of $G$.

Since $C_{m_1,m_2}^n$ is a subgraph of $G,$ by deleting some edges from $G$ we get $C_{m_1,m_2}^n \in \mathfrak{H}_{n,0}$ and by Lemma \ref{ed} $W(G)<W(C_{m_1,m_2}^n).$ Again by Lemma \ref{c-cmn}, $W(C_{m_1,m_2}^n)<W(C_n).$ Now the result follows from Lemma \ref{3cycles}.\\

\textbf{Case 2:} There is no integers $m_1,m_2 \geq 3$ with $n=m_1+m_2-1$ such that $C_{m_1,m_2}^n$ is a subgraph of $G$.

If $G$ is a two connected graph then by Theorem \ref{plesnic}, $W(G)\leq W(C_n)$ and the result follows from Lemma \ref{3cycles}. So let $G$ has at least one cut vertex. 

 \underline{Claim:}  $W(G) \leq W(C_{g_1,g_2}^n)$ for some $g_1,g_2 \geq 3$ and the equality holds if and only if $G=C_{g_1,g_2}^n.$

Since $G$ has a cut-vertex and no pendant vertices, so $G$ contains two cycles with at most one common vertex. Let $C_{g_1}$ and $C_{g_2}$ be two cycles of $G$ with at most one common vertex. Since $C_{m_1,m_2}^n$ with $m_1+m_2-1=n$ is  not a subgraph of $G$, so $g_1+g_2\leq n.$ Clearly $G$ has at least $n+1$ edges.

If $G$ has exactly $n+1$ edges, then there is no common vertex between $C_{g_1}$ and $C_{g_2}$ and $G=C_{g_1,g_2}^n.$ So, let $G$ has at least $n+2$ edges. Suppose $|E(G)|=n+k$, where $k\geq 2.$ Choose $k-1$ edges $\{e_1,\ldots,e_{k-1}\}\subset E(G)$ such that $e_i\notin E(C_{g_1})\cup E(C_{g_2}),\;\; i=1,\ldots, k-1$ and $G\setminus \{e_1,\ldots,e_{k-1}\}$ is connected. Let $G_1=G\setminus \{e_1,\ldots,e_{k-1}\}$ ($G_1$ may have some pendant vertices). Then by Lemma \ref{ed}, $W(G_1)> W(G).$ If $G_1$ has no pendant vertices then $G_1=C_{g_1,g_2}^n.$

Let $G_1$ has some pendant vertices. Then for some $l<n,$ $C_{g_1,g_2}^l$ is a subgraph of $G_1.$ By grafting of edges operation(if required), we can form a new graph $G_2$ from $G_1$ where $G_2$ is a connected graph on $n$ vertices obtained by attaching some paths to some vertices of $C_{g_1,g_2}^l.$ Then by Corollary \ref{effect-2}, $W(G_2)> W(G_1).$ If more than one paths are attached to different vertices of $C_{g_1,g_2}^l$ in $G_2$, then using the graph operation as mentioned in Lemma \ref{effect-3}, form a new graph $G_3$ from $G_2$, where $G_3$ has exactly one path attached to $C_{g_1,g_2}^l.$ Then by Lemma \ref{effect-3}, $W(G_3)> W(G_2).$ Let the path attached to the vertex $u$ in $C_{g_1,g_2}^l$ of $G_3.$ Then again we have two cases:\\
\textbf{Case-i:} $u \in V(C_{g_1})\cup V(C_{g_2})$\\
Without loss of generality, assume that $u\in V(C_{g_1}).$ Then the induced subgraph of $G_3$ containing the vertices of $C_{g_1}$ and the vertices of the path attached to it,  is the graph $U_{k,g_1}^l$ for some $k>g_1.$ Let $v$ be the pendant vertex of $U_{k,g_1}^l$. Since the two cycles $C_{g_1}$ and $C_{g_2}$ have at most one vertex in common, so we have two subcases:

\underline{Subcase-1}: $V(C_{g_1})\cap V(C_{g_2})=\{w\}$ 

Let $H_1$ be the induced subgraph of $G_3$ containing the vertices $\{V(G_3)\setminus V(U_{k,g_1}^l)\}\cup \{w\}.$ Clearly $H_1$ is the cycle $C_{g_2}.$ Then identify the vertex $v$ of $U_{k,g_1}^l$ with the vertex $w$ of $H_1$ to form a new graph $G_4.$  By Corollary \ref{3uni1}, $W(G_4)> W(G_3)$ and $G_4$ is the graph $C_{g_1,g_2}^n.$

\underline{Subcase-2}: $V(C_{g_1})\cap V(C_{g_2})=\phi$

Let $H_2$ be the induced subgraph of $G_3$ containing the vertices $V(G_3)\setminus V(U_{k,g_1}^l).$ In $G_3$ exactly one vertex $w_1\in U_{k,g_1}^l$ adjacent to exactly one vertex $w_2$ of $H_2.$ Form a new graph $G_5$ from $G_3$ by deleting the edge $\{w_1,w_2\}$ and adding the edge $\{v,w_2\}.$ By Corollary \ref{3uni1}, $W(G_5)> W(G_3)$ and $G_5$ is the graph $C_{g_1,g_2}^n.$\\
\textbf{Case-ii:} $u \notin V(C_{g_1})\cup V(C_{g_2})$\\
Let $w$ be the pendant vertex of $G_3$ and let $w_3$  be a vertex in $C_{g_1,g_2}^l$ of $G_3$ adjacent to $u.$ Form a new graph $G_6$ from $G_3$ by deleting the edge $\{u,w_3\}$ and adding the edge $\{w,w_3\}.$ By Corollary \ref{3uni1}, $W(G_6)> W(G_3)$ and $G_6$ is the graph $C_{g_1,g_2}^n.$ This proves our claim. 

Now from Corollary \ref{cor1}, it follows that $W(G) \leq W(C_{3,3}^n)$ and by (\ref{eq7}) $W(C_{3,3}^n)=\frac{n^3-13n+24}{6}.$ This completes the proof.
\end{enumerate}
\end{proof}
\noindent It can be checked easily that for $n\leq 5,$ the cycle $C_n$ has the maximum Wiener index over $\mathfrak{H}_{n,0}$ and for $n=6,$ the Wiener index is maximized by both the graphs $C_6$ and $C_{3,3}^n$.\\

\begin{proof}[{\bf Proof of Theorem \ref{min-thm0}:}]
\begin{enumerate}
\item[(i)] Let  $G\in \mathfrak{H}_{n,k}$ and let $v_1,v_2,\ldots,v_{n-k}$ be the non-pendant vertices of $G.$ If the induced subgraph $G [v_1,v_2,\ldots,v_{n-k}]$ is not complete, then form a new graph $G'$ from $G$ by joining all the non-adjacent non-pedant vertices of $G$ with new edges. Then $G' \in \mathfrak{H}_{n,k}$ and by Lemma \ref{ed} $W(G') < W(G).$ If $G'= P_n ^k$ then we are done, otherwise $G'$ has at least two vertices of degree greater than or equal to $n-k.$  Form a new graph $G''$ from $G'$ by moving all the pendant vertices to one of the vertex $v_1,v_2,\ldots,v_{n-k}$. Then  $G'' = P_n ^k$ and by Corollary \ref{effect-1}, the result follows. Let $u\in V(P_n ^k)$ be a vertex of degree $n-1.$ Then by Lemma \ref{count}, we have
\begin{align*}
W(P_n^k)&=W(K_{n-k})+W(K_{1,k})+(|V(K_{n-k})|-1)k+kD_{K_{n-k}}(u)\\
&= {n-k\choose 2}+k^2+2k(n-k-1).
\end{align*}

\item[(ii)] Let $G \in \mathfrak{H}_{n,n-2}.$ Then $G$ is isomorphic to a tree $T(k,l,2)$ for some $k, l\geq 1.$ If $k$ and $l$ both greater than or equal to $2$ then form the tree $T(1,n-3,2)$ from $G$ by moving pendant vertices from one end to other. The by Corollary \ref{effect-1},  $W(T(1,n-3,2))<W(G)$ and by taking $d=2,l=1$ and $k=n-3$ in (\ref{eq4}), we have
$W(T(1,n-3,2))=n^2-n-2.$ 
\end{enumerate}
\end{proof}

\begin{proof}[{\bf Proof of Theorem \ref{min-thm1}}]
We first prove that for $k \geq 3$, if $T \in \mathfrak{T}_{n,k}$ has minimum Wiener index then there is a unique vertex $v \in V(T)$ with $d(v)\geq 3.$
Let there be two vertices $u,v \in V(T)$ with $d(u)=n_1 \geq 3$, $d(v)=n_2  \geq 3.$ Let $N_T(u)=\{u_1,u_2,\ldots,u_{n_1}\}$ and $N_T(v)=\{v_1,v_2,\ldots,v_{n_2}\}$ where $u_1$ and $v_1$ lie on the path joining $u$ and $v$ ($u_1$ may be $v$ and $v_1$ may be $u$). Let $T_1$ be the largest subtree of $T$ consisting of $u,u_2,u_3,\ldots,u_{n_1-1}$ but not $u_1,u_{n_1}$ and $T_2$ be the largest subtree of $T$ containing $v,v_2,v_3,\ldots,v_{n_2-1}$ but not $v_1,v_{n_2}.$ We rename the vertices $u \in V(T_1)$ and $v \in V(T_2)$ by $u'$ and $v'$, respectively. Let $H=T \setminus \{u_2,u_3,\ldots,u_{n_1-1},v_2,v_3,\ldots,v_{n_2-1}\}.$ Construct two trees $T'$ and $T''$ from $H$, $T_1$ and $T_2$ by identifying the vertices $u,u',v'$ and $v,u',v'$, respectively. Clearly both $T', T''\in\mathfrak{T}_{n,k}$ and by Lemma \ref{effect-0}, either $W(T') < W(T)$ or $W(T'') < W(T)$ which is a contradiction.

Let $T$ be the tree which minimizes the Wiener index in $\mathfrak{T}_{n,k}$. For $k=2$, the only possible tree is the path $P_n$ which is isomorphic to $T_{n,2}.$ So assume $3 \leq k \leq n-2.$ Then there exists a unique vertex $v \in V(T)$ with $d(v) \geq 3.$ Hence the result follows from Corollary \ref{effect-2}.
\end{proof}

For $r=0,$ the tree $T_{n,k}$ is isomorphic to the tree $T_k^q$ and hence by (\ref{eq6}),
$$W(T_{n,k})=k{q+2 \choose 3}+\frac{q^2(q+1)k(k-1)}{2}.$$ For $1\leq r <k,$ by Lemma \ref{count}, we have 

$$W(T_{n,k})=W(T_r^{q+1})+W(T_{k-r}^q)+r(q+1)D_{T_{k-r}^q}(v)+(k-r)qD_{T_r^{q+1}}(v),$$ where $v$ is the vertex of $T_{n,k}$ with $T_{n,k}\setminus v = r P_{q+1} \cup (k-r) P_q$. Thus by using (\ref{eq5}) and (\ref{eq6}) the value of $W(T_{n,k})$ can be obtained.

\section{Proof of Theorem \ref{min-thm2}}

Any graph on $n$ vertices has at most $n-2$ cut vertices. The path $P_n$ is the only graph on $n$ vertices with $n-2$ cut vertices. Hence for $\mathfrak{C_{n,s}}$, we consider $0\leq s\leq n-3$. Let $\mathfrak{C_{n,s}^t}$ be the set of all trees on $n$ vertices with $s$ cut vertices. In a tree every vertex is either a pendant vertex or a cut vertex. So, $\mathfrak{C_{n,s}^t}= \mathfrak{T}_{n,n-s}.$ Hence the next result follows from Theorem \ref{pmax-thm1} and Theorem \ref{min-thm1}.
\begin{theorem}
For $0\leq s\leq n-3$, the tree $T(\lfloor\frac{n-s}{2}\rfloor,\lceil \frac{n-s}{2}\rceil,s)$ maximizes the Wiener index and the tree $T_{n,n-s}$ minimizes the Wiener index over $\mathfrak{C_{n,s}^t}.$   
\end{theorem}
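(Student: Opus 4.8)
The plan is to obtain this statement as an immediate corollary of the two extremal results already established for trees with a fixed number of pendant vertices, Theorem \ref{pmax-thm1} (maximum) and Theorem \ref{min-thm1} (minimum), by translating the cut-vertex parameter into the pendant-vertex parameter. The bridge is the observation recorded just before the statement: in any tree every vertex is either a pendant vertex or a cut vertex, so a tree on $n$ vertices has exactly $s$ cut vertices precisely when it has $n-s$ pendant vertices. This yields the identification of classes $\mathfrak{C}_{n,s}^{t} = \mathfrak{T}_{n,n-s}$, whereupon the maximizer and minimizer of the Wiener index over $\mathfrak{C}_{n,s}^{t}$ are by definition the maximizer and minimizer over $\mathfrak{T}_{n,n-s}$.

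First I would substitute $k = n-s$ into the two cited theorems and check that the extremal trees named there agree verbatim with those in the present statement. For the maximizer, Theorem \ref{pmax-thm1} produces $T(\lfloor\frac{k}{2}\rfloor,\lceil\frac{k}{2}\rceil,n-k)$, and with $k = n-s$ the path-length parameter becomes $n-k = s$, giving exactly $T(\lfloor\frac{n-s}{2}\rfloor,\lceil\frac{n-s}{2}\rceil,s)$ as claimed. For the minimizer, Theorem \ref{min-thm1} produces $T_{n,k} = T_{n,n-s}$ directly. Hence for every $s$ with $2 \le s \le n-3$, which corresponds to $3 \le n-s \le n-2$ and so lies inside the range $2 \le k \le n-2$ demanded by both theorems, the conclusion is immediate for the whole admissible range.

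The only remaining work is to dispose of the endpoint values $s \in \{0,1\}$, which fall outside the range $2 \le k \le n-2$ of the cited theorems; this is where whatever small care is needed must be taken, though there is no genuine analytic obstacle. For $s = 1$ we have $k = n-1$, and $\mathfrak{T}_{n,n-1}$ consists of the single tree $K_{1,n-1}$; I would verify that both named trees collapse to this star, since setting the path length to $d = s = 1$ in $T(\cdot,\cdot,d)$ merges the two groups of $\lfloor\frac{n-1}{2}\rfloor$ and $\lceil\frac{n-1}{2}\rceil$ pendant vertices at a single vertex (yielding $K_{1,n-1}$), while in $T_{n,n-1}$ the parameters force $q = \lfloor\frac{n-1}{n-1}\rfloor = 1$ and $r = 0$, again giving $K_{1,n-1}$. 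Thus the statement holds trivially by uniqueness of the class. For $s = 0$ the class $\mathfrak{C}_{n,0}^{t}$ is empty whenever $n \ge 3$, because any tree on at least three vertices has an internal vertex, which is necessarily a cut vertex; since the stated range $0 \le s \le n-3$ already forces $n \ge 3$, this case is vacuous. Once the class identification $\mathfrak{C}_{n,s}^{t} = \mathfrak{T}_{n,n-s}$ is in place, the entire result reduces to this endpoint bookkeeping together with a direct appeal to Theorems \ref{pmax-thm1} and \ref{min-thm1}.
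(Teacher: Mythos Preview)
Your proposal is correct and follows exactly the paper's approach: the paper's entire proof is the one-line observation that $\mathfrak{C}_{n,s}^{t}=\mathfrak{T}_{n,n-s}$ together with an appeal to Theorems~\ref{pmax-thm1} and~\ref{min-thm1}. Your treatment is in fact more careful than the paper's, since you explicitly handle the endpoint cases $s\in\{0,1\}$ (corresponding to $k=n-s\in\{n-1,n\}$) that lie outside the stated range $2\le k\le n-2$ of the cited theorems, whereas the paper glosses over this.
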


 A block in a graph $G$ is a maximal connected component without any cut vertices in it. Let $B_G$ be the graph corresponding to $G$ with $V(B_G)$ as the set of blocks of $G$ and two vertices $u$ and $v$ of $B_G$ are adjacent whenever the  corresponding blocks contains a common cut vertex of $G.$ A vertex of $G$ with minimum eccentricity is called a central vertex.  We call a block $B$ in $G$, a pendant block if there is exactly one cut vertex of $G$ in $B$. The block corresponding to a central vertex in $B_G$ is called a central block of $G.$  
\begin{lemma}\label{c}
Let $G$ be a graph which minimizes the Wiener index over $\mathfrak{C_{n,s}}$. Then every block of $G$ is a complete graph.
\end{lemma}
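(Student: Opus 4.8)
The plan is to argue by contradiction: suppose $G$ minimizes the Wiener index over $\mathfrak{C}_{n,s}$ but some block $B$ of $G$ is not a complete graph. The rough idea is that replacing $B$ by a more tightly connected structure either keeps the number of cut vertices the same (if we are careful about which edges we add), which would decrease $W(G)$ by Lemma \ref{ed} and contradict minimality, or it changes the cut-vertex count, in which case we need a compensating move elsewhere in the graph to restore $s$ cut vertices while still achieving a net decrease in $W$. So the first step is to fix notation: let $B$ be a non-complete block, let $c_1, \dots, c_t$ be the cut vertices of $G$ lying in $B$ (these are the only vertices of $B$ shared with the rest of $G$), and let $x, y \in V(B)$ be two non-adjacent vertices of $B$.

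The key observation to exploit is that within a $2$-connected block, adding an edge between two non-adjacent vertices never creates a cut vertex inside the block and never disconnects anything; the block stays $2$-connected (or becomes complete). So if $B$ has at least $3$ vertices and is $2$-connected but not complete, I would simply add the edge $\{x,y\}$: the resulting graph $G'$ is still connected, still has exactly $s$ cut vertices (the cut vertices of $G$ are unchanged — no vertex of $B$ that was a cut vertex stops being one, since the components hanging off $c_i$ outside $B$ are untouched, and no new cut vertex appears), and $W(G') < W(G)$ by Lemma \ref{ed}, contradicting minimality. The only case where a block can fail to be complete yet adding an internal edge is problematic is if the block itself is a bridge, i.e.\ a $K_2$ — but $K_2$ is complete, so this does not arise. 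Hence the main content is just verifying that adding a chord inside a $2$-connected non-complete block preserves the exact set of cut vertices of $G$.

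The step I expect to require the most care is precisely this cut-vertex bookkeeping: one must check that passing from $G$ to $G' = G + \{x,y\}$ leaves $\mathfrak{C}_{n,s}$-membership intact. For this I would use the block-cut structure: the cut vertices of $G$ are exactly the vertices lying in at least two blocks; adding a chord $\{x,y\}$ inside $B$ merges no blocks together except possibly refining $B$ itself (it cannot, since $B$ remains $2$-connected after adding an edge), and it cannot turn a non-cut vertex into a cut vertex because deleting any vertex $w$ from $G'$ leaves a graph that is a supergraph of $G \setminus w$ on the same vertex set, hence connected whenever $G \setminus w$ was. Conversely a cut vertex $c_i$ of $G$ remains a cut vertex of $G'$: the pieces of $G$ attached to $c_i$ outside $B$ are disjoint from $\{x,y\}$, so $G' \setminus c_i$ still has those pieces as separate components. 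Once this is established the proof is immediate, so the write-up is short; the subtlety is entirely in not overlooking a degenerate configuration of the block $B$.

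\begin{proof}
Suppose, for contradiction, that $G$ minimizes the Wiener index over $\mathfrak{C}_{n,s}$ but some block $B$ of $G$ is not a complete graph. Since $B$ is a block, either $B$ is a single edge $K_2$ (which is complete, contradicting our assumption) or $B$ is $2$-connected with at least three vertices. So $B$ is $2$-connected and, being non-complete, there exist two non-adjacent vertices $x, y \in V(B)$. Let $G' = G + \{x,y\}$ be the graph obtained from $G$ by adding the edge $\{x,y\}$.

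We claim $G' \in \mathfrak{C}_{n,s}$. Clearly $G'$ is connected with $n$ vertices. The cut vertices of $G$ are exactly the vertices belonging to at least two blocks of $G$, and the only vertices of $B$ shared with the rest of $G$ are the cut vertices of $G$ that lie in $B$. Adding the chord $\{x,y\}$ keeps $B$ $2$-connected, so $B$ together with this new edge is still a single block of $G'$; no other block of $G$ is affected, and no two blocks are merged. Hence a vertex lies in at least two blocks of $G'$ if and only if it lies in at least two blocks of $G$, so $G$ and $G'$ have the same cut vertices, and in particular $G' \in \mathfrak{C}_{n,s}$. By Lemma \ref{ed}, $W(G') < W(G)$, contradicting the minimality of $G$. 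Therefore every block of $G$ is a complete graph.
\end{proof}
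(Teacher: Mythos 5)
Your proof is correct and follows essentially the same route as the paper: add an edge between two non-adjacent vertices of a non-complete block, note the result stays in $\mathfrak{C}_{n,s}$, and invoke Lemma \ref{ed} for a contradiction. The only difference is that you carefully justify the cut-vertex bookkeeping, which the paper dismisses with ``Clearly.''
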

\begin{proof}
Let $B$ be a block of $G$ which is not complete. Then there are at least two non adjacent vertices in $B.$ Let $u$ and $v$ be two non adjacent vertices in $B$. Form a new graph $G'$ from $G$ by joining the edge $\{u,v\}.$ Clearly $G'\in \mathfrak{C_{n,s}}$ and by Lemma \ref{ed} $W(G')<W(G)$, which is a contradiction.
\end{proof} 
\begin{lemma} \label{c_0}
Let $G$ be a graph which minimizes the Wiener index over $\mathfrak{C_{n,s}}.$ Then every cut vertex of $G$ is shared by exactly two blocks. 
\end{lemma}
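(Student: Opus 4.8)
The plan is to argue by contradiction: suppose $G$ minimizes the Wiener index over $\mathfrak{C}_{n,s}$ but some cut vertex $w$ of $G$ is shared by three or more blocks. I would then show that $G$ can be modified — keeping the number of vertices and the number of cut vertices fixed — to strictly decrease the Wiener index, contradicting minimality. The natural tool is Lemma~\ref{effect-0} (movement of a component), which is designed exactly for this situation.

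\textbf{Main steps.} First, let $w$ be a cut vertex lying in blocks $B_1, B_2, \ldots, B_t$ with $t \geq 3$. Removing $w$ splits $G$ into components; group them so that we write $G$ as three edge-disjoint subgraphs $H, X, Y$ all sharing only the vertex $w$, where $H$ is (say) the union of $B_1$ together with everything hanging off it, $X$ is the part through $B_2$, and $Y$ is the part through $B_3, \ldots, B_t$ (each chosen with at least $2$ vertices, which is possible since $w$ is a genuine cut vertex shared by $\geq 3$ blocks). Here $u = v = w$ as vertices of $H$ in the notation of Lemma~\ref{effect-0}; to apply that lemma we instead need two \emph{distinct} vertices of $H$, so the correct setup is: pick a vertex $u$ of $B_1$ adjacent to $w$ (using that $B_1$ is a complete block of size $\geq 2$ by Lemma~\ref{c}), and apply Lemma~\ref{effect-0} with the roles $u, v := w$ in $H = B_1 \cup(\text{pendant part at }B_1)$, and $X, Y$ the two parts attached at $w$. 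Lemma~\ref{effect-0} then produces a graph $G_1^*$ or $G_2^*$ with strictly smaller Wiener index in which the attachment point of one of $X, Y$ has been slid from $w$ to $u$ (or vice versa). The remaining task is to check that this modified graph still lies in $\mathfrak{C}_{n,s}$: the vertex count is unchanged, and I must verify that the number of cut vertices does not change. Moving a subgraph $X$ from the cut vertex $w$ to the adjacent vertex $u$ (inside the complete block $B_1$) makes $u$ a cut vertex but may make $w$ cease to be one — so one should either move the component so that the total is preserved, or observe that if the count drops one can re-split differently; I would phrase the argument so that after the move $u$ becomes a new cut vertex while $w$ remains a cut vertex (this holds as long as at least two of $H, X, Y$ remain attached at $w$), hence the cut-vertex count is unchanged or, in the worst case, one handles the boundary configuration separately.

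\textbf{The main obstacle.} The delicate point is \emph{bookkeeping of the cut-vertex count} under the component-moving operation, not the Wiener-index inequality itself (which is handed to us by Lemma~\ref{effect-0}). Specifically, when $t = 3$ and both $B_2$ and $B_3$ are pendant-like single edges, sliding one of them to $u$ could in principle change how many vertices are cut vertices; I expect the clean way around this is to always move the \emph{smaller} side onto $u$ and note that $w$ stays a cut vertex because at least one nontrivial piece (namely $H$ together with the not-moved side) still meets both "sides" of $w$, while $u$ was previously a non-cut interior vertex of a complete block and becomes a cut vertex — a net change of zero. One must also double-check the edge case $t \geq 3$ with $s = 0$: but $s = 0$ forces $G$ to be a single block ($2$-connected or $K_1, K_2$), so no cut vertex exists and the statement is vacuous there, hence we may assume $s \geq 1$. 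With these checks in place, the contradiction with minimality is immediate and the lemma follows.
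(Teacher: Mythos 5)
There is a genuine gap, and it is twofold. First, Lemma~\ref{effect-0} is being applied in the wrong direction. That lemma starts from a graph in which $X$ is attached at $u$ and $Y$ is attached at a \emph{different} vertex $v$, and concludes that at least one of the two ``concentrated'' graphs (both $X$ and $Y$ at $u$, or both at $v$) has strictly smaller Wiener index. In your situation the graph $G$ you want to improve is already the concentrated one: both pieces hang off the single cut vertex $w$. The lemma says nothing about sliding one of them from $w$ to a neighbour $u$ inside $B_1$; if anything it suggests the opposite, namely that splitting the attachment points tends to \emph{increase} the Wiener index. So the key inequality $W(G')<W(G)$ that your contradiction needs is not ``handed to us'' by Lemma~\ref{effect-0} at all.

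Second, even granting the move, your cut-vertex bookkeeping is wrong at exactly the point you flag. If you slide $X$ from $w$ to a vertex $u$ of $B_1$ that was not a cut vertex, then (as you yourself note) $w$ remains a cut vertex because $B_1$ and the un-moved piece still meet only at $w$, and $u$ becomes a new cut vertex; that is a net change of $+1$, not zero, so the modified graph leaves $\mathfrak{C}_{n,s}$ and no contradiction with minimality over $\mathfrak{C}_{n,s}$ is obtained. The paper avoids all of this with a much simpler device: if $c$ lies in blocks $B_1,\dots,B_k$ with $k\geq 3$, add all missing edges among the vertices of $B_2\cup\cdots\cup B_k$, merging these blocks into one. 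By Lemma~\ref{ed} adding edges strictly decreases the Wiener index, and one checks directly that no cut vertex is created or destroyed (the pieces hanging off each old cut vertex, including $B_1$ at $c$, are still separated by the same vertices), so the new graph stays in $\mathfrak{C}_{n,s}$ — contradiction. The idea your proposal is missing is this edge-addition/block-merging step; the component-moving machinery is the wrong tool for this lemma.
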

\begin{proof}
Let  $c$ be a cut vertex in $G$ shared by more than two blocks say $B_1,B_2,\ldots,B_k, k\geq 3.$ Construct a new graph $G'$ from $G$ by joining all the non adjacent vertices of $\cup B_i, i=2,3,\ldots,k$. Then $G'\in \mathfrak{C_{n,s}}$ and by Lemma \ref{ed}, $W(G')<W(G)$ which is a contradiction.
\end{proof}
\begin{lemma}\label{c_2}
Let $m\geq 3.$ For  $i,j\in \{1,2,\ldots,m\}$, if $l_i \leq l_j-2,$ then $$W(K_m^n(l_1,\ldots,l_i+1,\ldots,l_j-1,\ldots,l_m)) < W(K_m^n(l_1,\ldots,l_i,\ldots,l_j,\ldots,l_m)).$$ 
\end{lemma}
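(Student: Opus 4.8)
The plan is to use the structure of $K_m^n(l_1,\dots,l_m)$ as a union of two subgraphs sharing a single cut vertex and apply Corollary \ref{effect-2} (the grafting inequality). Concretely, fix all indices other than $i$ and $j$, and write $H$ for the subgraph of $K_m^n(l_1,\dots,l_m)$ obtained by deleting the interior vertices of the two pendant paths $P_{l_i}$ and $P_{l_j}$ hanging at $v_i$ and $v_j$; that is, $H$ consists of $K_m$ together with all the other paths $P_{l_t}$, $t\neq i,j$. Then $K_m^n(l_1,\dots,l_m)$ is exactly the graph obtained from $H$ by attaching, at the single vertex $v_i$, a path of length $l_i-1$ and, at the single vertex $v_j$, a path of length $l_j-1$. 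The catch is that Corollary \ref{effect-2} is phrased for two paths attached at the \emph{same} vertex $v$; here the two paths are attached at the two distinct vertices $v_i$ and $v_j$. So the first step is to reduce to that situation.

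To make this reduction, I would note that $v_i$ and $v_j$ are adjacent in $K_m$ (they lie in a common complete block), so the path $P_{l_j-1}$ hanging at $v_j$ can be viewed, after passing through the edge $v_iv_j$, as a path of length $l_j$ attached at $v_i$ — but this changes the host graph, so instead I would argue directly with Lemma \ref{count}. Let $G_1$ be the subgraph of $K_m^n(\dots,l_i,\dots,l_j,\dots)$ consisting of everything except the interior of $P_{l_i}$, rooted at the cut vertex $v_i$ of $K_m^n$, and let $G_2 = P_{l_i}$ be the pendant path, so $V(G_1)\cap V(G_2) = \{v_i\}$. Applying Lemma \ref{count} twice — once for the configuration $(\dots,l_i,\dots,l_j,\dots)$ and once for $(\dots,l_i+1,\dots,l_j-1,\dots)$ — and noting that $|V(G_1)|$ and $|V(G_2)|$ change in a controlled way, one sees that the difference in Wiener indices is governed by how $D_{G_1}(v_i)$ and the path contributions change. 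The cleanest route is in fact to shift one vertex at a time: it suffices to prove that moving a single pendant vertex from the (longer) $j$-th path to the (shorter) $i$-th path strictly decreases $W$ whenever $l_i < l_j$; iterating this (which is legitimate since after one shift we still have $l_i+1 \le (l_j-1)+1 = l_j \le l_j$, i.e. the hypothesis $l_i \le l_j - 2$ guarantees we never overshoot the balanced point and the strict inequality persists at each of the $\ge 1$ steps) gives the lemma. For the single-vertex shift I would set up the two graphs as $G_{uv}$-type configurations and invoke Corollary \ref{effect-2} with the role of $v$ played by $v_j$: relative to the host graph $H'$ (everything except the two pendant paths and with one extra vertex absorbed appropriately), the two graphs $K_m^n(\dots,l_i,\dots,l_j,\dots)$ and $K_m^n(\dots,l_i+1,\dots,l_j-1,\dots)$ differ by a grafting move on paths attached at $v_i$ and $v_j$ respectively — more precisely, writing $G_0$ for the graph $H$ with the path $P_{l_i}$ already attached at $v_i$, the vertex $v_j$ has two "paths" emanating from it inside $G_0$: the genuine pendant path $P_{l_j-1}$, and the path through $v_j v_i \cdots$ reaching into $P_{l_i}$. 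Grafting the last vertex off the pendant path onto the far end of that second path is exactly the operation of Corollary \ref{effect-2} (or, if one prefers to stay purely computational, Lemma \ref{effect-3} applied with $G = H$, $u = v_j$, $v = v_i$, after checking $D_H(v_j) $ vs $D_H(v_i)$).

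Alternatively — and this is probably the slickest presentation — I would apply Lemma \ref{effect-3} directly. Take $G$ to be the graph $K_m$ together with the paths $P_{l_t}$ for all $t \neq i,j$ (so $|V(G)| = n - l_i - l_j + 2 \ge 3$ since $m \ge 3$ forces at least one more path or vertex, and in any case $n \ge 2$ after the two paths are removed; one checks $G$ has at least $3$ vertices because $m\ge 3$). Set $u = v_j$, $v = v_i$. The graph $K_m^n(\dots,l_i,\dots,l_j,\dots)$ is then $G_{uv}^p(l_j, l_i)$ in the notation of Lemma \ref{effect-3}, with both path-lengths $\ge 2$ once $l_j \ge l_i + 2 \ge 3$ (the case $l_i = 1$ needs the separate but easy observation that a path of length $0$, i.e.\ no path, still fits the scheme, or one treats $l_i=1$ by a direct Lemma \ref{count} computation). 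Since $G$ is not the $u$–$v$ path (it contains the complete graph $K_m$ with $m\ge 3$, hence a cycle) and since $D_G(v_j) = D_G(v_i)$ by the symmetry of $K_m$ and the fact that $v_i,v_j$ play identical roles in $G$ (they are both vertices of $K_m$ carrying no pendant path in $G$), the hypothesis $D_G(u)\ge D_G(v)$ of Lemma \ref{effect-3} holds, and the lemma yields
\[
W\bigl(K_m^n(\dots,l_i+l_j-1,\dots,1,\dots)\bigr) > W\bigl(K_m^n(\dots,l_i,\dots,l_j,\dots)\bigr).
\]
That is the extreme unbalanced case; to get the claimed one-step inequality I instead apply the single-vertex-shift argument of the previous paragraph, or note that Lemma \ref{effect-3} can be localized by the same bookkeeping.

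The main obstacle I anticipate is purely organizational rather than deep: making the "attached at $v_i$ vs.\ attached at $v_j$" discrepancy fit cleanly into the hypotheses of Corollary \ref{effect-2} or Lemma \ref{effect-3}, and handling the boundary case $l_i = 1$ (where "the path $P_{l_i}$" degenerates). I expect the cleanest writeup to do the single-vertex shift via Lemma \ref{count} directly: express $W$ of both graphs through the cut vertex $v_i$ (equivalently $v_j$), cancel the common terms, and reduce the inequality to a comparison of $D$-values along the two paths plus the term $(l_i - l_j + \text{const})\cdot(\text{something positive})$, which is manifestly negative under $l_i \le l_j - 2$. Once the one-step strict decrease is established, iterating $l_j - l_i - 1 \ge 1$ times to pass from $(l_i, l_j)$ to $(l_i+1, l_j-1)$ — wait, that is a single step; the lemma's conclusion \emph{is} the one-step inequality, so no iteration is even needed, and the only real content is the single shift. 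The hypothesis $l_i \le l_j - 2$ is exactly what is needed to ensure the shift does not merely permute the two lengths (which would give equality) but strictly unbalances-toward-balance, keeping the inequality strict.
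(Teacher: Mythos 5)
There is a genuine gap: none of the routes you sketch actually establishes the one-step inequality that \emph{is} the lemma. Your ``slickest'' route via Lemma \ref{effect-3} (with $u=v_j$, $v=v_i$) only compares $K_m^n(\ldots,l_i,\ldots,l_j,\ldots)$ with the fully unbalanced graph $K_m^n(\ldots,1,\ldots,l_i+l_j-1,\ldots)$; that inequality neither contains nor implies the claimed comparison between $(l_i,l_j)$ and $(l_i+1,l_j-1)$, as you half-concede. Your fallback --- ``express $W$ of both graphs through the cut vertex $v_i$, cancel the common terms'' --- does not go through as described: if you split off the $i$-th path at $v_i$, the remaining pieces of the two graphs are \emph{not} the same (the $j$-th path has length $l_j$ in one and $l_j-1$ in the other), so nothing cancels and the claimed reduction to a ``manifestly negative'' term is unsubstantiated. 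The missing idea is the choice of decomposition: peel off the single pendant vertex being transferred. Both graphs then consist of the \emph{identical} $(n-1)$-vertex graph $K_m^{n-1}(l_1,\ldots,l_i,\ldots,l_j-1,\ldots,l_m)$ with one extra pendant vertex attached either at $w_1$ (the tip of the $i$-th path) or at $w_2$ (the tip of the shortened $j$-th path); by Lemma \ref{count} (equivalently Corollary \ref{new1} with $H=K_2$) the difference of the two Wiener indices is exactly $D(w_1)-D(w_2)$ computed in that common graph, which is the paper's proof.

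Even after this reduction there is a second step you never engage with: one must argue that $D(w_1)<D(w_2)$, and this is where both hypotheses enter --- $l_i\le l_j-2$ makes $w_1$ strictly closer to the clique than $w_2$, and $m\ge 3$ guarantees a vertex outside the two paths that makes the inequality strict (for $m=2$ the graph is a path and $D(w_1)=D(w_2)$, so $W$ does not change). In your write-up $m\ge 3$ is used only to ensure the host graph is not a $u$--$v$ path for Lemma \ref{effect-3}, which is a different role; the strictness of the $D$-comparison in the common subgraph is the actual content of the lemma and is left unproved. So while your instinct that ``only a single shift, handled via Lemma \ref{count}'' matches the paper's strategy in spirit, the decisive decomposition and the distance comparison that make it work are absent.
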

\begin{proof}
Let $u$  be the pendant vertex of $K_m^n(l_1,\ldots,l_i+1,\ldots,l_j-1,\ldots,l_m)$ on the path $P_{l_i+1}$ and $v$ be the pendant vertex of $K_m^n(l_1,\ldots,l_i,\ldots,l_j,\ldots,l_m)$ on the path $P_{l_j}$. Let $w_1$ and $w_2$ be the vertices adjacent to $u$ and $v$, respectively. Then using Lemma \ref{count} we have 
\begin{align*}
&W(K_m^n(l_1,\ldots,l_i+1,\ldots,l_j-1,\ldots,l_m)) - W(K_m^n(l_1,\ldots,l_i,\ldots,l_j,\ldots,l_m))\\
&=D_{K_m^{n-1}(l_1,\ldots,l_i,\ldots,l_j-1,\ldots,l_m)}(w_1) - D_{K_m^{n-1}(l_1,\ldots,l_i,\ldots,l_j-1,\ldots,l_m)}(w_2). 
\end{align*}
 Since $l_i <l_j-1$ and $m\geq 3,$ so the result follows.
\end{proof}

Let $G$ be a graph in which every cut vertex is shared by exactly two blocks. Then $B_G$ is a tree. So, $B_G$ has either one central vertex or two adjacent central vertices and hence $G$ has either one central block or two central blocks with a common cut vertex. 
\begin{lemma}\label{c_3}
Let $G$ be a graph which minimizes the Wiener index over $\mathfrak{C_{n,s}}$. If $s\geq 2$, then every pendant block of $G$ is $K_2.$ 
\end{lemma}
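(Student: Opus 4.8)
The plan is to argue by contradiction: suppose $G$ minimizes the Wiener index over $\mathfrak{C}_{n,s}$ but has a pendant block $B$ which is not $K_2$. By Lemma \ref{c}, $B$ is a complete graph $K_m$ with $m \geq 3$, and by Lemma \ref{c_0} the unique cut vertex $c$ of $G$ lying in $B$ is shared only by $B$ and one other block. Since $s \geq 2$, the graph $G$ has at least two cut vertices, so $B_G$ is a tree with at least two vertices, and in particular $G \neq B$; thus there is genuinely a nontrivial "rest of the graph" hanging off $c$. Write $G = B \cup H$, where $H$ is the union of all blocks of $G$ other than $B$, so that $V(B) \cap V(H) = \{c\}$ and $H$ is connected with $|V(H)| \geq 3$ (since $s \geq 2$ forces $H$ to contain at least one more cut vertex and its neighbourhood).

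The key step is to modify $G$ by peeling one vertex off the complete pendant block and reattaching it as a pendant vertex elsewhere, showing the Wiener index strictly decreases, which contradicts minimality. Concretely, let $x$ be a vertex of $B$ other than $c$. Form $G'$ from $G$ by deleting all edges from $x$ to $B \setminus \{x\}$ and instead joining $x$ by a single edge to $c$ (equivalently, to any vertex of $H$ that lies in a block we wish to enlarge). We must check two things: first, that $G' \in \mathfrak{C}_{n,s}$, i.e. the number of cut vertices is unchanged — here one has to be a little careful, because shrinking $K_m$ to $K_{m-1}$ and attaching $x$ as a pendant may change which vertices are cut vertices; the cleanest choice is to attach $x$ to $c$ itself (keeping $c$ a cut vertex) and to verify that $x$ becomes a pendant vertex while no vertex of $B \setminus \{c\}$ was a cut vertex to begin with (true, since $B$ is a pendant block), so the cut-vertex count is preserved. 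Second, and this is the main computational point, we must show $W(G') < W(G)$. Using Lemma \ref{count} with cut vertex $c$, we have $W(G) = W(B) + W(H) + (|V(B)|-1)D_H(c) + (|V(H)|-1)D_B(c)$, and similarly $W(G') = W(B') + W(H) + (|V(B')|-1)D_H(c) + (|V(H)|-1)D_{B'}(c)$ where $B' = K_{m-1}$ with a pendant vertex $x$ attached at $c$, on $m$ vertices. Since $|V(B)| = |V(B')| = m$, the difference reduces to $W(G') - W(G) = \big(W(B') - W(B)\big) + (|V(H)|-1)\big(D_{B'}(c) - D_B(c)\big)$.

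The main obstacle, then, is purely the arithmetic comparison of these two small graphs: $B = K_m$ versus $B' = K_{m-1}$ with a pendant edge at $c$. One computes $W(K_m) = \binom{m}{2}$ and $D_{K_m}(c) = m-1$; for $B'$, using Lemma \ref{count} again (cut vertex $c$, pieces $K_{m-1}$ and $K_2$), $W(B') = \binom{m-1}{2} + 1 + (m-2)\cdot 1 + 1\cdot(m-2) = \binom{m-1}{2} + 2m - 3$ and $D_{B'}(c) = (m-2) + 1 = m-1$. Hence $D_{B'}(c) - D_B(c) = 0$ and $W(B') - W(B) = \binom{m-1}{2} + 2m - 3 - \binom{m}{2} = (2m-3) - (m-1) = m - 2 > 0$ for $m \geq 3$. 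That gives $W(G') - W(G) = m-2 > 0$, i.e. $W(G') > W(G)$ — the wrong direction! So attaching $x$ as a pendant at $c$ is not the right move; instead one should reattach $x$ onto a pendant block of $H$ (which enlarges a $K_t$ to a $K_{t+1}$, or lengthens a path), and the correct statement to prove is that \emph{some} such reattachment strictly decreases $W$. The honest approach is: among all complete pendant blocks of $G$ of size $\geq 3$, the reattachment that merges a vertex of the \emph{larger} such block (or the one farther from the centre) into the \emph{smaller} one decreases the Wiener index — this is an instance of the "grafting toward the centre" and "balancing" phenomena already captured by Corollary \ref{effect-2}, Lemma \ref{effect-0} and Lemma \ref{c_2}. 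So the real plan is: invoke Lemma \ref{effect-0} with $H$ the block-path backbone, $X$ a single vertex taken from the pendant block, and $Y$ the remainder, to move that vertex either toward the centre or into another pendant block, in either case strictly decreasing $W$ unless the pendant block was already $K_2$; the delicate part is checking that the move keeps the graph in $\mathfrak{C}_{n,s}$, which is exactly where the hypothesis $s \geq 2$ is used (there must be somewhere else to put the vertex that does not create or destroy a cut vertex).
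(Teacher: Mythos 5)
Your proposal does not reach a proof. Your first move (peel one vertex $x$ off the pendant block $B=K_m$ and hang it as a pendant at $c$) is computed correctly, but as you yourself observe it \emph{increases} the Wiener index by $m-2$, so it proves nothing. Everything after that is a sketch of intentions rather than an argument: the appeal to Lemma \ref{effect-0} is not legitimate, since that lemma requires $X$ and $Y$ to have at least two vertices each and it only covers moving an entire attached component between two fixed vertices $u,v$ of $H$ --- it says nothing about absorbing a vertex into a clique block (which changes adjacencies inside a block), and it only guarantees that \emph{one} of two specified moves decreases $W$, not the one you want. Likewise, the claim that ``some reattachment of $x$ into another (smaller, or more central) pendant block strictly decreases $W$'' is exactly the statement that needs proof, and your own computation shows that plausible-looking local moves of this kind can go the wrong way; neither Corollary \ref{effect-2} nor Lemma \ref{c_2} applies, since those concern grafting/balancing of \emph{paths}, not cliques attached at different places. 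So the key decreasing operation is missing, and with it the whole contradiction.

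The operation the paper uses is different and is the idea you did not find: do not move one vertex far away; instead merge almost all of $B$ into the \emph{adjacent} block. With $V(B)=\{v_1,\dots,v_m\}$, $v_1=c$ the cut vertex, and $B'$ the neighbouring block with $V(B')=\{v_1=u_1,u_2,\dots,u_r\}$, $r\ge 2$, delete the edges $\{v_2,v_j\}$ for $j=3,\dots,m$ and add all edges $\{v_j,u_i\}$, $j=3,\dots,m$, $i=2,\dots,r$. Then $v_2$ becomes a pendant vertex at $v_1$, the vertices $v_3,\dots,v_m$ join the block $B'$, and the set of cut vertices is unchanged, so the new graph stays in $\mathfrak{C}_{n,s}$. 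The distance bookkeeping is now favourable: the only distances that increase are $d(v_2,v_j)$, $j=3,\dots,m$, each by one (total $m-2$), while each $d(v_j,u_i)$ with $i\ge 2$ drops by one (total at least $(m-2)(r-1)\ge m-2$), and --- this is where $s\ge 2$ enters --- there is a vertex $w$ in a third block whose distance to each $v_j$ also drops, giving a strict decrease $W(G')<W(G)$ and the desired contradiction. Your draft uses $s\ge 2$ only to argue that there is ``somewhere else to put the vertex,'' which is not where the hypothesis is actually needed.
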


\begin{proof}
All the blocks in $G$ are complete by Lemma \ref{c}. Suppose $B$ is a pendant block of $G$ which is not $K_2.$ Let $V(B)=\{v_1,v_2,\ldots,v_m\}$ with  $m>2.$ Assume $v_1$ is the cut vertex of $G$ in $B$ which is shared by another block $B'$ with $V(B')=\{v_1=u_1,u_2,\ldots,u_r\}$ and  $r\geq2.$ Construct a new graph $G'$ from $G$ as follows: Delete the edges $\{v_2,v_j\}, j=3,4,\ldots,m$ and add the edges $\{v_j,u_i\},j=3,4,\ldots,m$ and $i=2,3,\ldots r.$ When $G$ changes to $G'$ the only type of distances which increase are $d(v_2,v_j), j=3,4,\ldots, m.$ Each such distance increases by one and hence the total increment in distances  for $v_j,\;\; j=\{3,\ldots,m\}$ is exactly $m-2$. The distance $d(v_j,u_i), j=3,4,\ldots, m\;\;i =2,3,\ldots r$ decreases by one. Since $r\geq 2,$ the total distance decreases by such pair of vertices is at least $m-2.$ Since $s\geq 2$ there exists a vertex $w$ belonging to some other block $B''$ such that $d(v_j,w), j=3,4,\ldots m$ decreases by one. So $W(G')<W(G)$, which is a contradiction.
\end{proof}

Let $G$ be a graph in which every block is complete and every cut vertex is shared by exactly two blocks. Let $B$ be a central block in $G$. Let $B_1$ be a non central non pendant block of $G$ and $c_1,c_2 \in V(B_1)$ be two cut vertices of $G$. Suppose that the vertex $c_1$ is identified by a pendant vertex of a path $P_l$ and $c_2$ is shared by another block $B_2$ such that the vertices corresponding to $B_1, B_2$ and $B$ in the tree $B_G$ lie on a path. Let $V(B_1)=\{c_1=u_1,u_2,\ldots,u_{m_1}=c_2\}$ and $V(B_2)=\{v_1,v_2,\ldots,v_{m_2}=c_2\}$. Construct a new graph $G'$ from $G$ as follow: Delete the edges $\{c_1,u_i\}$ for all $u_i\in V(B_1)\setminus \{c_1,c_2\}$ and add the edges $\{u_i,v_j\}$ for all $u_i \in V(B_1)\setminus\{c_1,c_2\}$ and $ v_j \in V(B_2)\setminus \{c_2\}.$
\begin{lemma}\label{c_1}
Let $G$ and $G'$ be the graphs defined as above.Then $W(G')<W(G).$
\end{lemma}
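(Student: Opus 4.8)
The plan is to show $W(G')<W(G)$ by a direct accounting of how pairwise distances change under the surgery, in the spirit of the proof of Lemma \ref{c_3}. Write $A=V(B_1)\setminus\{c_1,c_2\}=\{u_2,\ldots,u_{m_1-1}\}$, and let $B=V(B_2)\setminus\{c_2\}=\{v_1,\ldots,v_{m_2-1}\}$. In $G$ the vertices of $A$ hang off the cut vertex $c_1$; in $G'$ they are pulled over and attached to every vertex of $B_2$, hence they become "closer to $c_2$ and to everything beyond $c_2$" and "farther from $c_1$ and from everything behind $c_1$" (the path $P_l$ side and the central block side). The key structural point I would exploit is the hypothesis that $B_1,B_2,B$ lie on a path in $B_G$: this means $c_1$ separates $A$ from the bulk of the graph on one side (the path $P_l$ has only $l-1$ vertices), while $c_2$ separates $A$ from the central block $B$ and everything attached beyond it, which by the "central block" property is at least half of $V(G)$.

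The main steps, in order: (1) Partition $V(G)\setminus(A\cup\{c_1,c_2\})$ into $S_1$ = vertices reachable from $B_1$ only through $c_1$ (this is the $P_l\setminus\{c_1\}$ side, of size $l-1$), $S_{B_2}=B$ (the rest of $B_2$), and $S_2$ = everything reachable only through $c_2$ that is not in $B_2$ (containing $V(B)\setminus\{c_2\}$ and all blocks beyond, hence of size $\ge n/2 - O(1)$ because $B$ is central). (2) Compute, for each $u\in A$, the change in $d(u,\cdot)$: distances to $c_1$ and to $S_1$ increase by exactly $1$; distances to $c_2$, to $S_2$, and to $S_{B_2}$ decrease by exactly $1$ (in $G$, $d(u,c_2)=2$ via $c_1$... actually $d_G(u,c_2)=2$ since $u\sim c_1\sim c_2$ within... no: in $G$, $u$ and $c_2$ both lie in the complete block $B_1$, so $d_G(u,c_2)=1$; in $G'$, $u\sim v_j$ for $v_j\in V(B_2)$ and $c_2\in V(B_2)$, so $d_{G'}(u,c_2)=1$ — I must instead note $d$ changes only for $S_1,S_2$ and is unchanged for $c_1,c_2$, and within $A$ all pairwise distances stay $1$). (3) Sum over $u\in A$: the net change is $|A|\bigl((l-1)+1 - |S_2| - |B|\bigr) = |A|\bigl(l - |S_2| - |B|\bigr)$; since $|S_2|+|B|\ge |S_2|\ge $ (size of the component of $B_G$ beyond $c_2$, which contains the central block) and $l-1$ is the size of the $P_l$-tail, the centrality of $B$ forces $|S_2|\ge l-1$, in fact strictly more unless degenerate, giving a strictly negative net change. (4) Conclude $W(G')<W(G)$.

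The delicate point — and the main obstacle — is verifying the inequality $l < |S_2|+|B|$ rigorously from the hypothesis that $B$ is a \emph{central} block and that $B_1$ sits between $B$ and its pendant path on a geodesic of $B_G$. I would argue: the tail $P_l$ attached at $c_1$ is contained in the pendant-side subtree of $B_G$ hanging below $B_1$, whereas $S_2$ contains all of the subtree of $B_G$ on the far side of $c_2$, which includes the central block $B$ itself and (being central) at least as many vertices as the side containing $B_1$ and its tail; since $S_2$ strictly contains that far side minus $\{c_2\}$ and also $|B|\ge 1$ with $B_2$ having $m_2\ge 2$, the strict inequality $l<|S_2|+|B|$ follows (the edge case $m_2=2$, i.e. $B_2=K_2$, still works because then $S_2$ alone dominates). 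One must also double-check that the new graph $G'$ still lies in $\mathfrak{C}_{n,s}$, i.e. that the surgery preserves the number of cut vertices: $c_1$ ceases to be a cut vertex of the $A$-part but $c_2$ now carries that burden, and no new cut vertices are created since $A\cup V(B_2)$ becomes one complete block — so the count is unchanged. I would state these bookkeeping checks briefly and then present the distance computation of step (3) as the core of the argument.
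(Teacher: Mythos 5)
Your overall counting scheme is the same as the paper's (increments only between the moved vertices and the $P_l$-side, decrements toward everything beyond $c_2$, then compare), but the step you yourself flag as the crux is justified incorrectly. You derive the key inequality $l<|S_2|+|B|$ from the claim that, since $B$ is a \emph{central} block, the side of $c_2$ away from $B_1$ contains at least as many vertices as the side containing $B_1$ and its tail (you even write $|S_2|\ge n/2-O(1)$). That is not what centrality means here: a central block is one whose vertex in $B_G$ has minimum \emph{eccentricity}, i.e.\ it is the center of the block tree, not its centroid, so it controls branch \emph{depths}, not branch \emph{cardinalities}. One can easily have most vertices of $G$ on the $B_1$-side (say many depth-one complete blocks hanging from the internal vertices $u_2,\dots,u_{m_1-1}$) while the central block still lies beyond $c_2$ because that branch is longer; so the vertex-count comparison you invoke is false, and your proof of the pivotal inequality collapses. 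The inequality itself is true, but the correct argument is metric, which is what the paper does: since $B_1$ is not central, $e_{B_G}(B_1)\ge e_{B_G}(B)+1$, while $e_{B_G}(B)\ge d_{B_G}(B,B_1)+(l-1)$ because the $l-1$ path blocks hang off $B_1$ on the opposite side of $B$; since every branch of $B_G$ at $B_1$ other than the one through $c_2$ is seen from $B$ through $B_1$, these two facts force the branch through $c_2$ to contain a chain of at least $l$ blocks beyond $B_2$, hence at least $l$ vertices in $S_2$. Those $l$ vertices neutralize the $l$ increments (to $c_1=t_1,t_2,\dots,t_l$), and the decrements to the $m_2-1\ge 1$ vertices of $V(B_2)\setminus\{c_2\}$ then give the strict inequality.

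A secondary, repairable omission: you only account for pairs with one endpoint in $A=V(B_1)\setminus\{c_1,c_2\}$, and your ``partition'' of $V(G)\setminus(A\cup\{c_1,c_2\})$ into $S_1$, $S_{B_2}$, $S_2$ misses the components $H_i$ that may hang from the internal vertices $u_i$ (the setup does not forbid $u_i$ from being cut vertices, and the paper's proof introduces these $H_i$ explicitly). Pairs between $H_i\setminus\{u_i\}$ and the path also increase by one, and pairs between $H_i\setminus\{u_i\}$ and $S_2\cup(V(B_2)\setminus\{c_2\})$ decrease by one, so your formula ``net change $=|A|\,(l-|S_2|-|B|)$'' is not the total change; the fix is to sum the same signed quantity over all of $\bigcup_i V(H_i)$, which does not affect the sign but must be stated. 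Two small further points: if $B=B_2$ then $V(B)\setminus\{c_2\}$ lies in your $S_{B_2}$, not in $S_2$; and the check that $G'\in\mathfrak{C}_{n,s}$ is not part of this lemma (it is done where the lemma is applied), so it is harmless but not needed here.
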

\begin{proof}
For $i=2,\ldots,m_1-1$, let $H_i$ be the maximal connected component of $G$ containing exactly one vertex $u_i$ of $B_1$. Let $P_l:t_1t_2\cdots t_l$ be the path with $t_1$ identified with $c_1.$ When $G$ changes to $G'$, the only type of distances which increase in $G'$ are $d_{G'}(u,t_j)$ where $u\in \cup_{i=2}^{m_1-1}V(H_i)$ and $j=1,2,\ldots,l$. Each such distance increases by one in $G'.$ For any other pair of vertices, the distance between them either decreases or remains the same. Since $B_1$ is not a central block, for each $t_j, j=1,2,\ldots,l$ there exists a vertex $t_j' \in V(G)\setminus \left(\cup_{i=2}^{m_1-1}V(H_i) \cup \{t_1,t_2,\ldots,t_l,v_1,v_2,\ldots,v_{m_2}\}\right)$ such that $d_{G'}(u,t_j')$ decreases by one where $u\in \cup_{i=2}^{m_1-1}V(H_i)$. So, the increment in distance by the pairs $u,t_j$ are neutralized by the pairs $u,t_j'.$ Apart from this at least the distances $d_{G'}(u_i,v_j)$ for $i=2,3,\ldots,m_1-1$ and $j=1,2,\ldots,m_2-1$ decreases by one. So $W(G')<W(G).$ 
\end{proof}

\begin{proof}[{\bf Proof of Theorem \ref{min-thm2}:}]
Let $G$ be a graph which minimizes the Wiener index over $\mathfrak{C_{n,s}}.$ we first claim that $G$ is isomorphic to $K_{n-s}^n(l_1,\ldots,l_{n-s})$ for some $l_1,l_2,\ldots,l_{n-s}.$

By Lemma \ref{c} and Lemma \ref{c_0}, every block of $G$ is complete and every cut vertex of $G$ is shared by exactly two blocks. If $s=0$, then $G$ has exactly one block and $G=K_n$ also $K_n$  is isomorphic to $K_n^n(1,1,\cdots,1).$ 

For $s=1,$ $G$ has exactly two complete blocks with a common vertex $w$ (say). Let $B_1$ and $B_2$ be the  two blocks of $G.$  If any of $B_1$ or $B_2$ is $K_2$ then $G$ is isomorphic to $K_{n-1}^n(2,1,\ldots,1)$.  Otherwise, let $V(B_1)=\{u_1,u_2,\ldots,u_{m_1}=w\}$ and $V(B_2)=\{v_1,v_2,\ldots,v_{m_2}=w\}$ with $m_1,m_2 > 2.$ Construct a new graph $G'$ from $G$ as follow: Delete the edges $\{u_1,u_i\} ,i=2,3,\ldots,m_1-1$ and add the edges $\{u_i,v_j\},i=2,3,\ldots,m_1-1;j=1,2,\ldots,m_2-1.$ Clearly $G'\in \mathfrak{C_{n,s}}.$ Then the only type of distances which increase are $d(u_1,u_j), j=2,3,\ldots u_{m_1-1}$ and each such distance increases by one. So total increment in distance is exactly $m_1-2.$ Also each distance $d(u_i,v_j),\; i=2,3,\ldots,m_1-1; j=2,3,\ldots m_2-1$ decreases by one. The total decrement is $(m_1-2)(m_2-1).$ Since $m_1,m_2>2,$ so $W(G')<W(G)$, which is a contradiction. Hence $G$ is isomorphic to $ K_{n-1}^n(2,1,\ldots,1)$.

Now suppose $s\geq 2.$ Then $G$ has $s+1$ blocks and also $G$ has either one central block or two adjacent central blocks. 

\underline{Claim:} All non central blocks of $G$ are $K_2.$\\
Suppose $B$  is a non central block of $G$ which is not $K_2$. Then by Lemma \ref{c_3}, $B$ must be a non pendant block. Construct $G'$ from $G$ as in Lemma \ref{c_1}. Clearly $G'\in \mathfrak{C_{n,s}}$  and  by Lemma \ref{c_1}, $W(G')<W(G)$ which is a contradiction. 

If $G$ has exactly one central block, then  $G$ is  isomorphic to $K_{n-s}^n(l_1,\ldots,l_{n-s})$ for some $l_1,l_2,\ldots,l_s$. Suppose $G$ has two central blocks and $G$ is not isomorphic to $K_{n-s}^n(l_1,\ldots,l_{n-s})$ for any $l_1,l_2,\ldots,l_{n-s}$. Then each of the central blocks of $G$ has at least $3$ vertices. Let $B_1$ and $B_2$ be the two central blocks with a common vertex $w.$ Let  $V(B_1)=\{u_1,u_2,\ldots,u_{m_1}=w\}$ and $V(B_2)=\{v_1,v_2,\ldots,v_{m_2}=w\}$ with $m_1,m_2 > 2.$ Let $H_1$($H_2$) be the maximal connected component of $G$ containing exactly one vertex $w$ of $B_2$($B_1$). Let $P_l:wu_1t_3\cdots t_l$ be the longest path in $H_1$ starting at $w$ containing $u_1$ such that non of the vertex $t_3,\ldots,t_l$ belongs to $B_1.$ Take $w$ as $t_1$ and $u_1$ as $t_2$ in $P_l.$ Since $B_1$ and $B_2$ are central blocks, so there exists a path $P'_l:t_1't_2'\cdots t_l'$ on $l$ vertices in $H_2$ starting at $w=t_1'$ and containing exactly two vertices of $B_2.$ Construct a new graph $G'$ from $G$ as follow: Delete the edges $\{u_1,u_i\} ,i=2,3,\ldots,m_1-1$ and add the edges $\{u_i,v_j\},i=2,3,\ldots,m_1-1;j=1,2,\ldots,m_2-1.$ Clearly $G'\in \mathfrak{C_{n,s}}.$ The only type of distances which increase in $G'$ are $d_{G'}(u,t_j)$ where $u\in V(H_1)\setminus V(P_l)$ and $j=2,\ldots,l$ also each such distance increases by one. The  distance $d_{G'}(u,t_j')$ decreases by one where $u\in V(H_1)\setminus V(P_l)$ and $j=2,\ldots,l$. So, the increment in distance by the pairs $u,t_j$ are neutralized by the pairs $u,t_j'.$ Since $m_2\geq 3$, there exist at least one vertex $w'$ in $B_2$ which is not in $P_l'.$ For each $u\in V(H_1)\setminus V(P_l)$, the distance $d_{G'}(u,w')$ decreases by one. So, $W(G')< W(G)$, which is a contradiction. Hence $G$ is $K_{n-s}^n(l_1,\ldots,l_{n-s})$ for some $l_1,l_2,\ldots,l_{n-s}.$ Now the result follows from Lemma \ref{c_2}.
\end{proof}

\noindent{\bf Addresses}:\\

\noindent 1) School of Mathematical Sciences,\\
National Institute of Science Education and Research (NISER), Bhubaneswar,\\
P.O.- Jatni, District- Khurda, Odisha - 752050, India\medskip

\noindent 2) Homi Bhabha National Institute (HBNI),\\
Training School Complex, Anushakti Nagar,\\
Mumbai - 400094, India\medskip

\noindent E-mails: dinesh.pandey@niser.ac.in, klpatra@niser.ac.in

\end{document}